\newtheorem{theorem}{Theorem}[section]
\newtheorem{proposition}[theorem]{Proposition}
\newtheorem{lemma}[theorem]{Lemma}
\theoremstyle{definition}
\theoremstyle{remark}
\newtheorem{remark}[theorem]{Remark}
\numberwithin{equation}{section}
\def\AA{{\mathbb A}}
\newcommand{\RR}{{\mathbb R}}
\newcommand{\TT}{{\mathbb T}}
\newcommand{\ZZ}{{\mathbb Z}}
\newcommand{\QQ}{{\mathbb Q}}
\newcommand{\tPhi}{\tilde{\Phi}}
\newcommand{\abs}[1]{|{#1}|}
\newcommand{\norm}[1]{\|{#1}\|}
\def\epsilon{\varepsilon}
\def\Dif{ {\mbox{\rm D}} }
\def\df{\dot{f}}       % derivative of f
\def\ddf{\ddot{f}}     % 2nd derivative of f
\def\to{t}        % Old t
\def\Pto{\bar t}  % New t
\def\vo{v}        % Old v
\def\Pvo{\bar v}  % New v
\def\Eo{e}        % Old E
\def\PEo{\bar e}  % New E
\DeclareMathOperator{\supp}{supp}
\DeclareMathOperator*{\esssup}{ess\,sup}
\DeclareMathOperator*{\essinf}{ess\,inf}
\begin{document}

\title{Diffusion and chaos in a bouncing ball model}

\author{Stefano Mar\`o}
\address{Dipartimento di Matematica,  Universit\`a di Pisa, Largo Bruno Pontecorvo 5, 56127 Pisa, Italy}
%\curraddr{}
\email{stefano.maro@unipi.it}

\thanks{This work has been supported by the PRIN Project "Regular and stochastic behaviour in dynamical systems", funded by the Ministry of Education, University and Scientific Research of Italy with the identification no. 2017S35EHN}

\begin{abstract}
We consider the vertical motion of a free falling ball bouncing elastically on a racket moving in the vertical direction according to a regular periodic function $f$. We give a sufficient condition on the second derivative of $f$ giving motions with arbitrarily large amplitude and chaotic dynamics in the sense of positive entropy. We get the results by breaking many invariant curves of the corresponding map using converse KAM techniques.  
\end{abstract}
\maketitle

%\tableofcontents

\section{Introduction}
\noindent The vertical dynamics of a free falling ball on a moving racket is considered. The racket is supposed to
move periodically in the vertical direction according to a regular periodic function $f(t)$ and
the ball is reflected according to the law of elastic bouncing when hitting the
racket. The only force acting on the ball is the gravity $g$. Moreover, the mass of the racket is assumed to be large with respect to the mass of the
ball so that the impacts do not affect the motion of the racket.

\noindent This model has inspired many authors as it represents a simple model exhibiting complex dynamics; see for example \cite{dolgo,kunzeortega2,maro2,maro3,maro4,pust,ruiztorres}.

We will be concerned with the existence of motions in which the the velocity of the ball could pass from a small value to an arbitrarily large one (see the statement of Theorem \ref{teo_main} for more details). We will call this kind of motions diffusive, being freely inspired by the well known phenomenon of Arnold diffusion in nearly integrable Hamiltonian systems.\\
A first example of diffusive motions is given by unbounded motions, in which the velocity tends to infinity. On this line, Pustylnikov \cite{pust} showed that %this problem depends on the function $f(t)$.
%Actually, if $\norm{\df}$ is sufficiently small in some $C^k$ norm then every possible motion is bounded. On the other hand,
if
\begin{equation}
  \label{pustcond}
\df(t_0)\geq\frac{g}{2}
\end{equation}
at some point $t_0$, then there exist motions that gain velocity at every bounce escaping to infinity.\\% Note that unbouned motions are clearly diffusive. \\
Large values of the first derivative are not necessary to have unbounded motions. Actually, in \cite{maro4} it was proven that one can construct periodic functions $f$ with arbitrarily small first derivative for which there exist motions that gain velocity at every $N$ bounces for some $N$, escaping to infinity.\\
These results rely on the existence of some resonance, represented by different bounces when the racket has the same height and moves upwards. Moreover, the initial velocity has to be large enough.

In this paper we show the existence of diffusive motions via an indirect way. More precisely, we will get the following condition, depending only on the second derivative of $f$. Denote $m = \min\ddf$ and $M=\max\ddf$ then, if
\begin{equation}
  \label{main}
m <- \frac{g}{   1+\sqrt{1 +\frac{g}{M}}  }
\end{equation}
then there exist diffusive motions for initial velocities sufficiently large. 

The possible motions of the ball can be described by the orbits of a map $\Psi$, that we call Tennis Map for clear reasons. The Tennis Map turns out to be  exact symplectic and twist when defined on the cylinder with coordinates $(t,e)$, time of bouncing and energy just after the bounce. For this class of maps, it comes from a theorem of Birkhoff that the destruction of rotational invariant curves implies the existence of diffusive orbits. From this, the main idea for the proof of the results is that condition \eqref{main} shall imply that there are no rotational invariant curves for large values of $e$.

The study of necessary conditions for the existence of invariant curves for symplectic twist maps goes back to Birkhoff who proved that a rotational invariant curve is the graph of a (periodic) Lipschitz function. It means that the oscillations of the tangent vectors to an orbit are controlled by the Lipschitz constant $L$. Moreover, a sharp estimate on $L$ would give a more stringent criterion. In the last decades many results have been proven in this direction and extended to higher dimensions giving rise to the so called ``Converse KAM'' theory \cite{percivalmk,mkmeiss,haro}.    

These criteria have a variational characterization. Orbits of exact symplectic twist maps correspond to stationary points of an action and the ones on invariant curves are action-minimizing. As a consequence, the second variation of the action must be positive on orbits on invariant curves. MacKay and Percival \cite{percivalmk} showed that this criterion is equivalent to the control of the oscillations taking the Lipschitz constant $L=\infty$. Using the same technique we show that an improvement of the estimate on $L$ gives a strictly positive lower bound for the second variation of the action. The value in \eqref{main} will come from an application of this last result. It is easy to note that requiring only the positiveness of the second variation we would get a result on existence of diffusive orbits under the condition
\[
m<-\frac{g}{2}
\]
that is stronger than \eqref{main}.

Finally, we also note that the destruction of invariant curves is strictly correlated with the presence of chaotic dynamics. This fact holds for exact symplectic twist maps of the cylinder and is based on Aubry-Mather theory. More precisely, the chaotic character of the dynamics in a Birkhoff region of instability is described in \cite{angenent1,angenent2,matherams}. However, our case is slightly different: we obtained the destruction of invariant curves in an open and unbounded region of the cylinder, while a Birkhoff region of instability is a compact region between two invariant curves. 

In \cite{maro2} we proved that the results in \cite{angenent1,angenent2} can be extended to the present case, if condition \eqref{pustcond} together with a similar one giving decelerating orbits are satisfied. More precisely, we got semiconjugation with the Bernoulli shift and therefore positive topological entropy.\\
We will show that condition \eqref{main} alone also gives positive entropy for our map. This will follow from an application of a result of Forni, giving the existence of invariant measures with positive entropy supported in the gaps of the Aubry-Mather sets. 

The paper is organized as follows. In Section \ref{sec:statement} we introduce the Tennis Map $\Psi$ and describe our main results. In Section \ref{sec:exact} we introduce exact symplectic twist maps and describe two necessary conditions for the existence of invariant curves. In Section \ref{sec:applications} we apply the results from Section \ref{sec:exact} to the Tennis Map proving the non existence of invariant curves. In Section \ref{sec:chaos} we discuss the proof of our main theorem as a consequence of the non existence of invariant curves. Conclusions are drawn in Section \ref{sec:conclusion}.

\section{Statement of the problem and main result}\label{sec:statement}

Consider the problem of the motion of a bouncing ball on a vertically
moving racket. We assume that the impacts do not affect the racket  whose vertical position is described by a
$1$-periodic $C^3$ function $f:\RR\rightarrow\RR$. To get the equations of motion, we put ourselves 
in
an inertial frame, denoting by $(\to,w)$ the time of impact and the corresponding velocity just after the bounce, and by
$(\Pto,\bar w)$ the corresponding values at the subsequent bounce.
From
the free falling condition we have
\begin{equation}\label{timeeq}
f(t) + w(\Pto-\to) - \frac{g}{2}(\Pto -\to)^2 = f(\Pto) \,,
\end{equation}
where $g$ stands for the standard acceleration due to gravity.
Noting that the velocity just before the impact at time $\Pto$ is $w-g(\Pto-\to)$, using
the elastic impact condition and recalling that the racket is not affected by the ball, we obtain
\begin{equation}
  \label{veleq}
\bar{w}+w-g(\Pto-\to) = 2\dot{f}(\Pto)\,,
\end{equation}
where $\dot{}$ stands for the derivative with respect to time. From conditions \eqref{timeeq},\eqref{veleq} we can define a bouncing motion given an initial condition $(t,w)$ in the following way. If $w\leq\df(t)$ then we set $\bar{t}=t$ and $\bar{w}=w$. If $w>\df(t)$, we choose $\bar{t}$ to be the smallest solution $\bar{t}\geq t$ of \eqref{timeeq}. Bolzano theorem gives the existence of such solution considering
\[
F_t(\bar{t})=f(t)-f(\Pto) + w(\Pto-\to) - \frac{g}{2}(\Pto -\to)^2 
\]
and noting that $F_t(\bar{t})<0$ for $\Pto-\to$ large and $F_t(\bar{t})>0$ for $\Pto-\to\rightarrow 0^+$. For this value of $\bar{t}$, condition \eqref{veleq} gives the updated velocity $\bar{w}$.

For $\Pto-\to>0$, we introduce the notation
\[
f[\to,\Pto]=\frac{f(\Pto)-f(\to)}{\Pto-\to},
\]
and write
\begin{equation}
  \label{w1}
\Pto = \to + \frac 2g w -\frac 2g f[\to,\Pto]\,,
\end{equation}
that also gives
\begin{equation}
  \label{w2}
\bar{w}= w -2f
[\to,\Pto]
+ 2\dot{f}(\Pto).
\end{equation}
Now we change to the moving frame attached to the racket, where the velocity after the impact is expressed as $v=w-\dot{f}(t)$,
and we get
the equations
\begin{equation}\label{eq:unb}
  \left\{
  \begin{split}
\Pto = {} & \to + \frac 2g \vo-\frac 2g f[\to,\Pto]+\frac 2g \df(\to)\textcolor{blue}{\,}
\\
\Pvo = {} & \vo - 2f[\to,\Pto] + \df (\Pto)+\df(\to)\textcolor{blue}{\,.}
\end{split}
\right.
\end{equation}
 By the periodicity of the
function $f$, the coordinate $t$ can be seen as an angle. Hence, equations \eqref{eq:unb} define formally a map
\[
\begin{array}{rcl}
\Psi: 
\AA & \longrightarrow & \AA \\
(\to,\vo) & \longmapsto & (\Pto, \Pvo),
\end{array}
\]
where we denoted $\AA = \TT\times \RR$ with $\TT = \RR/\ZZ$.
This
is the formulation considered by Kunze and Ortega
\cite{kunzeortega2}. Another approach was considered by Pustylnikov in
\cite{pust} and leads to a map that is equivalent to
\eqref{eq:unb}, see \cite{maro3}.
Noting that $w>\df(t)$ if and only if $v>0$, we can define a bouncing motion as before and denote it as a sequence $(t_n,v_n)_{n\in\ZZ^+}$ with $\ZZ^+=\{n\in\ZZ \::\: n\geq 0\}$ such that $(t_n,v_n)\in \TT\times [0,+\infty)$ for every $n\in\ZZ^+$.

We are going to show the following result
\begin{theorem}\label{teo_main}
  Suppose that $f\in C^3(\TT)$ is such that
\begin{equation*}
%  \label{main}
m <- \frac{g}{   1+\sqrt{1 +\frac{g}{M}}  },
\end{equation*}
where  $m=\min \ddf, M=\max\ddf$. Then
\begin{itemize}
\item for every $A>0$ there exists a bouncing motion $(t_n,v_n)_{n\in\ZZ^+}$ such that \[
 \sup_{n\in\ZZ^+}v_n- \inf_{n\in\ZZ^+}v_n >A,
  \]

\item there exist many compact $\Psi$-invariant subsets of $\AA$  on which $\Psi$ has positive topological entropy.
  \end{itemize}

  \end{theorem}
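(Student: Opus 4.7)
The plan is to deduce both statements from a single dynamical fact: under hypothesis \eqref{main}, the Tennis Map $\Psi$, written in its natural $(\to,\Eo)$ coordinates with $\Eo$ the energy just after a bounce, admits no rotational invariant curve in the unbounded region $\{\Eo>e_*\}$ for some $e_*$ large.

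First I would put $\Psi$ in exact-symplectic-twist form on $\AA$ and write a generating function $h(\to,\Pto)$, so that bouncing motions correspond to critical points of the discrete action $\sum_n h(\to_n,\to_{n+1})$. From \eqref{eq:unb} the essential features of $h$ are that the twist $-h_{12}$ is of order $g/2$, while along an orbit of large energy the diagonal partials $h_{11}(\to_n,\to_{n+1})$ and $h_{22}(\to_{n-1},\to_n)$ are dominated by $\ddf$ evaluated at the bounce time $\to_n$. The converse-KAM framework recalled in Section~\ref{sec:exact} then says that if an orbit $(\to_n,\Eo_n)_n$ lies on a rotational invariant curve, the second variation of the action
\[
Q(\xi)=\sum_n\bigl(h_{22}(\to_{n-1},\to_n)+h_{11}(\to_n,\to_{n+1})\bigr)\xi_n^2 + 2\sum_n h_{12}(\to_n,\to_{n+1})\xi_n\xi_{n+1}
\]
is nonnegative on every compactly supported $\xi$, while Birkhoff's graph theorem furnishes an a priori Lipschitz bound $L=L(M,g)$ for the slope of the curve.

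The core step is to sharpen this nonnegativity criterion using the Lipschitz constraint. Applying the Percival--MacKay criterion with $L=\infty$ (i.e.\ unrestricted test sequences) only yields the threshold $m<-g/2$. Incorporating the a priori bound $L(M,g)$ into the test-variation argument upgrades $Q\geq 0$ to a strictly positive lower bound on an appropriate class of variations; expressing this bound in terms of $m,M,g$ reduces the feasibility of an invariant curve to a quadratic inequality in $m$ whose critical value is precisely $m=-g/(1+\sqrt{1+g/M})$. Thus \eqref{main} contradicts the existence of an invariant curve at every orbit where the $O(1/\vo)$ error terms in $h_{ij}$ coming from $\Pto-\to=\tfrac{2}{g}\vo+O(1/\vo)$ can be absorbed, i.e.\ for $\Eo>e_*$. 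With this in hand the first bullet follows from Birkhoff's theorem in the unbounded form used in \cite{maro2}: in the absence of invariant curves above $e_*$ there exist orbits whose energy, hence whose velocity, oscillates by any prescribed amount. For the second bullet, Forni's theorem on invariant measures of positive metric entropy supported in the gaps of Aubry--Mather sets applies to intervals of rotation numbers free of invariant curves and delivers the required compact $\Psi$-invariant subsets of positive topological entropy.

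The main obstacle will be the sharpening step: the mere sign of $Q$ yields only the soft bound $m<-g/2$, and to reach the extremal constant in \eqref{main} one must exploit the finite a priori Lipschitz constant from Birkhoff's theorem inside the variational computation instead of the cruder $L=\infty$ choice of \cite{percivalmk}, and simultaneously control the return-time error terms in $h_{ij}$ uniformly in $\Eo$ so that strict negativity of $Q$ survives for all sufficiently large energies.
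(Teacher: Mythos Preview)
Your outline is the paper's own strategy: pass to energy coordinates, exhibit the exact symplectic twist structure and generating function, use a sharpened converse-KAM criterion to kill all rotational invariant curves above some energy level, and then read off diffusion via Birkhoff and positive entropy via Forni. The computation you anticipate is exactly the one carried out: with $a(t)\sim g+2\ddf(t)$ and $b(t)\sim g/2$ at high energy, the improved criterion $a(t_0)\geq b(\varphi(t_0))D^-+b(t_0)/D^+$ fails at $t_0=\arg\min\ddf$ precisely when $m<M-\sqrt{M^2+gM}=-g/(1+\sqrt{1+g/M})$.

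Two technical points you elide do real work in the paper. First, $\Phi$ is only a $C^2$ embedding of the half-cylinder $\AA_{e_*}$ into $\AA$, so neither the second clause of Birkhoff's theorem nor Forni's theorem applies directly; the paper first extends $\Phi$ to a twist diffeomorphism $\tilde\Phi$ of the full cylinder (integrable below $e_*/2$), runs the whole argument for $\tilde\Phi$, and only then transfers conclusions back. Second, the diffusive orbit produced by Birkhoff is an orbit of $\tilde\Phi$, and one must argue separately (Lemma~\ref{lemmabo2}) that a segment of it with the prescribed oscillation actually stays above $e_*$, where $\tilde\Phi=\Phi$, so that it corresponds to a genuine bouncing motion; this uses a crossing argument through the strip $[e_*,e_*+E^*]$. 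Finally, a clarification on your sharpening step: the quantitative bound $L(M,g)$ is not delivered by Birkhoff's graph theorem, which is purely qualitative. What Birkhoff gives is that the induced circle map $\varphi$ is bi-Lipschitz; the explicit bounds $D^\pm$ on $\varphi'$ then come from iterating the relation $\varphi'_n=\tfrac{a}{b}-\tfrac{b}{b}\cdot\tfrac{1}{\varphi'_{n-1}}$ into a continued fraction converging to the larger root of $d^2-B^\pm d+C^\pm=0$. That self-improving iteration is the MacKay--Percival mechanism and is where the numbers $B^\pm=2(g+2M)/g$, $C^\pm=1$ and hence the threshold in \eqref{main} originate.
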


\section{Exact symplectic twist maps and destruction of invariant curves}\label{sec:exact}
In this section we consider a (possibly unbounded) strip of the cylinder and denote it by $\Sigma=\TT\times(a,b)$ with $-\infty\leq a<b\leq +\infty$. Let  $S:\Sigma \rightarrow \AA$, be a $C^2$-embedding and denote $S(x,y)=(\bar{x},\bar{y})$ and $S^n(x,y)=(x_n,y_n)$. We suppose that $S$ is exact symplectic and twist.
%
%that preserves the ends of the cylinder and twists each end infinitely.
 The exact symplectic condition requires the existence of a $C^2$ function $V:\Sigma\rightarrow \RR$ such that
\[
\bar{y} d \bar{x} -y dx = dV(x,y) \quad\mbox{in }\Sigma,
\]
and the (positive) twist condition reads
\[
\frac{\partial \bar{x}}{\partial y} >0 \quad\mbox{in }\Sigma. 
\]
A negative twist condition would give analogous results. Moreover, the exact symplectic condition implies that $S$ is orientation preserving and preserves the two-form $dy\wedge dx$.  %Moreover, $S$ preserves the ends of the cylinder if, uniformly in $x$,
%\[
%\bar{y} \rightarrow \pm\infty \quad\mbox{as }y\rightarrow \pm\infty
%\]
%and twists each end infinitely if, uniformly in $x$,
%\[
%\bar{x}-x \rightarrow \pm\infty \quad\mbox{as }y\rightarrow \pm\infty.
%\]
For this class of maps, the following result is well known \cite{bangert,matherforni}. 
\begin{proposition}
  \label{prop_h}
  There exist a domain $\Omega\subset \RR^2$ and a $C^2$ function $h:\Omega\rightarrow\RR$ such that
  \begin{itemize}
  \item $h(x+1,\bar{x}+1) = h(x,\bar{x})$ in $\Omega$,
  \item $h_{12}(x,\bar{x}) <0 $ in $\Omega$,
  \item for $(x,y)\in\Sigma$ we have $S(x,y) = (\bar{x},\bar{y})$ if and only if
    \[
    \left\{
    \begin{split}
      h_1(x,\bar{x})&=-y \\
      h_2(x,\bar{x})&=\bar{y}.
    \end{split}
    \right.
\]
  \end{itemize}
\end{proposition}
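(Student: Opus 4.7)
The plan is to construct $h$ as the standard generating function associated with an exact symplectic twist map. The twist hypothesis allows a change of coordinates from $(x,y)$ to the pair $(x,\bar{x})$ of successive positions, and the exact symplectic hypothesis then yields $h$ as a primitive of a closed $1$-form in these new coordinates.

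First I would use the twist condition to set up the coordinate change. Since $\partial \bar{x}/\partial y > 0$ on $\Sigma$, for each fixed $x$ the map $y \mapsto \bar{x}(x,y)$ is strictly increasing, hence injective; combined with the preservation of the first coordinate, this shows that $\Phi(x,y) := (x, \bar{x}(x,y))$ is globally injective, and the inverse function theorem makes $\Phi$ a $C^2$ diffeomorphism from $\Sigma$ onto an open set $\Omega \subset \RR^2$ (after lifting in $x$ so that $\bar{x}$ is real-valued). This produces $C^2$ functions $y = y(x, \bar{x})$ and $\bar{y} = \bar{y}(x, \bar{x})$ on $\Omega$. Since the lift of $S$ satisfies $S(x+1, y) = (\bar{x}+1, \bar{y})$, the domain $\Omega$ is invariant under $(x,\bar{x}) \mapsto (x+1, \bar{x}+1)$ and both $y(\cdot,\cdot)$ and $\bar{y}(\cdot,\cdot)$ are invariant under this translation.

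Next I would exploit exact symplecticity. Pulling back the identity $\bar{y}\, d\bar{x} - y\, dx = dV$ through $\Phi^{-1}$ and setting $h := V \circ \Phi^{-1}$, one obtains
\[
\bar{y}(x, \bar{x})\, d\bar{x} - y(x, \bar{x})\, dx = dh \qquad \mbox{on }\Omega,
\]
which amounts to $h_1 = -y$ and $h_2 = \bar{y}$ and is exactly the desired equivalence between $S$ and the generating relations (both directions being immediate from the construction). For the twist property of $h$, differentiating $h_1(x, \bar{x}) = -y(x, \bar{x})$ in $\bar{x}$ yields $h_{12} = -\partial y/\partial \bar{x}$; since $y \mapsto \bar{x}(x,y)$ is strictly increasing for each $x$, so is its inverse $\bar{x} \mapsto y(x,\bar{x})$, hence $h_{12} < 0$. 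Periodicity is clean: $V$ is defined on the cylinder $\Sigma$, so $V(x+1,y) = V(x,y)$, and combining with the invariance of $y(\cdot,\cdot)$ gives
\[
h(x+1, \bar{x}+1) = V(x+1, y(x+1, \bar{x}+1)) = V(x, y(x, \bar{x})) = h(x, \bar{x}).
\]

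The main technical point is the cylinder/universal-cover bookkeeping: the statements $\Omega \subset \RR^2$ and $h(x+1, \bar{x}+1) = h(x, \bar{x})$ only make sense once one has lifted $S$ so that $\bar{x}$ is a genuine real-valued function, whereas the identity $\bar{y}\,d\bar{x} - y\,dx = dV$ is given on the cylinder and $V$ is automatically $1$-periodic in $x$. Once this distinction is made consistently and one checks that $\Omega$ is open and connected (inherited from $\Sigma$ via the diffeomorphism $\Phi$), the three claimed properties of $h$ follow directly from the computations above.
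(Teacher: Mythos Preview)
Your construction is correct and is exactly the standard argument for the existence of a generating function. Note, however, that the paper does not actually prove Proposition~\ref{prop_h}: it is stated as well known, with references to \cite{bangert,matherforni}, and the only elaboration given is in the subsequent remarks, where the domain is described as $\Omega = \{(x,\bar{x}) : \bar{x}(x,a) \le \bar{x} \le \bar{x}(x,b)\}$ and the twist formula $h_{12}(x,\bar{x}) = -(\partial \bar{x}/\partial y)^{-1}$ is recorded. Your proof recovers precisely this: your $h_{12} = -\partial y/\partial \bar{x}$ is the same formula via the inverse function theorem, and your $\Omega = \Phi(\Sigma)$ coincides with the paper's description since for each fixed $x$ the image of $(a,b)$ under $y \mapsto \bar{x}(x,y)$ is the interval $(\bar{x}(x,a),\bar{x}(x,b))$. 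So there is nothing to compare against beyond these remarks, and your argument is the one the cited references give.
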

\begin{remark}
Here we denoted the partial derivative of $h$ w.r.t the $i$-th variable by $h_i$. We will use this notation throughout the paper.
\end{remark}  
\begin{remark}
  The domain $\Omega$ can be defined in the following way (see \cite{ortegakunze}):% if we denote $\bar{x} = \bar{x}(x,y)$, for $S_1:\Sigma\rightarrow\RR$, then
  \begin{equation}
    \label{defomega}
    \Omega = \left\{ (x,\bar{x})\in\RR^2 \: :\: \bar{x}(x,a)\leq \bar{x}\leq \bar{x}(x,b)   \right\}.
    \end{equation}
  The condition $h_{12}(x,\bar{x}) <0 $ is related with the twist condition. Actually, the twist implies that we can write $y = y(x,\bar{x})$ and one gets that
  \[
h_{12}(x,\bar{x}) = -\left(\frac{\partial \bar{x}}{\partial y}(x,y(x,\bar{x}))\right)^{-1}.
  \]
  \end{remark}
The function $h$ is called generating function and gives an equivalent implicit definition of the diffeomorphism $S$. From this proposition one has that a sequence $(x_n,y_n)_{n\in\ZZ}$ such that $(x_n,y_n)\in\Sigma$ for every $n\in\ZZ$ is an orbit of $S$ if and only if for every $n\in\ZZ$, $(x_n,x_{n+1})\in \Omega$ and 
\begin{align}
\label{acca}  &h_2(x_{n-1},x_n) + h_1(x_n,x_{n+1})=0,  \\
  &y_n=-h_1(x_{n},x_{n+1})\nonumber.    
\end{align}

From now on we will consider the case $\Sigma =\AA$ and suppose that $S$ preserves the ends of the cylinder that is
\[
\bar{y} \rightarrow \pm\infty \quad\mbox{as }y\rightarrow \pm\infty \mbox{ uniformly in } x,
\]
and twists each ends infinitely that is
\[
\bar{x}-x \rightarrow \pm\infty \quad\mbox{as }y\rightarrow \pm\infty \mbox{ uniformly in } x.
\]
Here, with some abuse of notation, we still denoted $\bar{x}(x,y)$ the first component of the lift of $S$ to the universal cover $\RR^2$ of $\AA$. In particular, $x\in\RR$ and $\bar{x}(x+1,y)=\bar{x}(x,y)+1$.\\ 
In this way, the generating function is defined in $\Omega=\RR^2$.
This allows to give a variational characterization of the orbits of $S$ defining the action
\[
H_{hk}(x_h,\dots,x_k)=\sum_{n=h}^{k-1} h(x_n, x_{n+1})
\]
and seeing that solutions of \eqref{acca} (and hence orbits of $S$) are in 1-1 correspondence with stationary points of $H_{hk}$
with respect to variations fixing the endpoints $x_h,x_k$.

We will be interested in action minimizing orbits, i.e. orbits $(x_n,y_n)_{n\in\ZZ}$ of $S$ such that for every pair of integers $h<k$ and for every sequence of real numbers $(x^*_n)_{h\leq n\leq k}$ such that $x_h^*=x_h$ and $x_k^*=x_k$ it holds
\begin{equation*}
 H_{hk}(x_h,\dots,x_k)    \leq H_{hk}(x^*_h,\dots,x^*_k).
    \end{equation*}

In this framework, we will be concerned with necessary conditions for the existence of invariant curves for $S$. More precisely, an invariant curve will be a curve $\Gamma\subset\Sigma$ homotopic to $\{(x,y)\in\AA \: : \:y=k, \mbox{ for some }k\in\RR  \}$ and such that $S(\Gamma) = \Gamma$.

Let us start recalling the following well known result by Birkhoff (for a proof see \cite{herman,mather_no,percivalmk}).
\begin{theorem}
  \label{birk}
Every invariant curve $\Gamma$ of a symplectic twist diffeomorphism of $\AA$ that preserves and twists infinitely the ends is the graph of a Lipschitz function $P:\TT\rightarrow \RR$, i.e. $\Gamma =\{(x,P(x))\in\AA \: :\: x\in\TT \}$. Moreover, if there exist $y^-<y^+$ such that each orbit $(x_n,y_n)_{n\in\ZZ}$ with $y_0<y^-$ satisfies $y_n<y^+$ for every $n\in\ZZ$, then there exists an invariant curve $\Gamma\subset \TT\times (y^-,y^+)$.
\end{theorem}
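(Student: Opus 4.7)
The theorem is classical (Birkhoff); I will follow the standard geometric argument, streamlined e.g.\ in MacKay--Percival or Herman. Throughout, since $\Gamma$ is invariant and homotopically nontrivial, it separates $\AA$ into two open connected $S$-invariant components $U^+$ and $U^-$ containing respectively the upper and lower ends.

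To see that $\Gamma$ is a graph, I argue by contradiction: if some vertical fibre met $\Gamma$ in two distinct points, one could enclose a small open "tongue" $T\subset U^-$ of positive area, bounded by a sub-arc of $\Gamma$ and a short vertical segment. The positive twist tilts every vertical segment into the graph of a strictly increasing function of $x$, so the image $S(T)$ is again a tongue of the same area (by $S^*(dy\wedge dx)=dy\wedge dx$) bounded by an arc of $\Gamma$ and a monotone non-vertical arc, still contained in $U^-$ by invariance. The forward iterates of $T$ are pairwise disjoint, have equal area, and are confined to a horizontal strip of finite area (the one containing $\Gamma$), producing the desired contradiction. Once $\Gamma=\{y=P(x)\}$, the Lipschitz bound comes from the twist: if two points on $\Gamma$ had a very large slope between them, the twist would spread their $S$-images by at least a constant times the vertical gap in the $x$-direction, while both images remain on the graph of $P$; this forces a uniform bound on chords of $\Gamma$ depending only on the twist and $C^1$-norms of $S$ near $\Gamma$.

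For the second assertion, set $W=\bigcup_{n\in\ZZ}S^n(\TT\times(-\infty,y^-))$. By hypothesis $W\subset\TT\times(-\infty,y^+)$, and $W$ is open, $S$-invariant, and contains a neighbourhood of the lower end. Let $P(x)=\sup\{y:(x,y)\in W\}\in[y^-,y^+)$; its graph $\Gamma_*$ is closed, $S$-invariant and homotopic to a horizontal circle, so the first assertion promotes $\Gamma_*$ to an invariant Lipschitz curve lying in $\TT\times[y^-,y^+)$. In the degenerate case $P\equiv y^-$ one replaces $y^-$ by a slightly smaller value, which by openness of the escape condition still satisfies the trapping hypothesis, and repeats the construction to place the curve strictly inside $\TT\times(y^-,y^+)$.

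The main obstacle is the tongue--area argument in the first step: since $\Gamma$ is only continuous a priori, one cannot use tangent vectors, and the contradiction must be obtained by topological bookkeeping of the components of $\AA\setminus\Gamma$ under iteration, combined with the quantitative area estimate from exact symplecticness. This is the classical technical core of Birkhoff's theorem, which later presentations (Mather, Herman, MacKay--Percival) streamline but do not really eliminate.
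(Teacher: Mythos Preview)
The paper does not prove this theorem; it records it as a classical result of Birkhoff and refers the reader to \cite{herman,mather_no,percivalmk} for a proof. Your sketch is in the spirit of those references, so there is nothing to compare on the paper's side.

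That said, two points in your outline deserve tightening. In the first part, the assertion that the forward iterates of the tongue $T$ are pairwise disjoint is not automatic: one typically argues instead with the open invariant region $U^-$ below $\Gamma$, showing that if $\Gamma$ failed to be a graph then $U^-$ (or its complement) would fail to be simply connected or would have accessible boundary points violating the twist monotonicity; the area argument is then applied more carefully than ``iterates of a single tongue''. In the second part, you define $P(x)=\sup\{y:(x,y)\in W\}$ and then invoke the first assertion on its graph $\Gamma_*$; but the first assertion presupposes that $\Gamma_*$ is already a continuous curve homotopic to a horizontal circle, whereas a priori $P$ is only upper semicontinuous. The standard route (as in Herman or Mather) is to show directly, using the twist and the invariance of both $W$ and $\AA\setminus\overline{W}$, that $\partial W$ is a Lipschitz graph, rather than first producing a set and then appealing to the graph theorem. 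Your final remark about perturbing $y^-$ to push the curve strictly above $y^-$ is also not quite right: the trapping hypothesis need not be open in $y^-$.
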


The necessary conditions that we are going to give, will be concerned with the properties of the orbits on the invariant curve itself. For this purpose, we state here a consequence of Theorem \ref{birk}.

\begin{lemma}
  \label{lemfi}
  For every invariant curve $\Gamma$ of $S$ there exists an increasing bi-Lipschitz homeomorphism $\varphi:\RR\rightarrow\RR$ such that $\varphi(x+1)=\varphi(x)+1$ and 
  \begin{equation}
    \label{dEL}
  h_2(\varphi^{-1}(x),x)+h_1(x,\varphi(x)) =0,
  \end{equation}
for every $x\in\RR$.  Moreover, for every $x,x_*\in\RR$
  \[
0<\essinf_\TT \varphi'\leq \frac{\varphi(x)-\varphi(x_*)}{x-x_*}\leq\esssup_\TT \varphi' <\infty.
  \]
\end{lemma}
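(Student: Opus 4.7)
The plan is to apply Birkhoff's theorem (Theorem \ref{birk}) to write $\Gamma$ as the graph of a Lipschitz function $P:\TT\to\RR$, extend $P$ $1$-periodically to all of $\RR$, and then define
\[
\varphi(x) := \bar{x}(x,P(x)),
\]
where $\bar{x}(x,y)$ denotes the lifted first component of $S$ on $\RR^2$. The equivariance $\varphi(x+1)=\varphi(x)+1$ is then immediate from the $\ZZ$-equivariance of the lift of $S$ together with the periodicity of $P$.

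Next I would verify that $\varphi$ is an increasing homeomorphism. Since $S$ is an orientation-preserving diffeomorphism sending $\Gamma$ onto itself, the restriction $S|_\Gamma$ is an orientation-preserving homeomorphism of a topological circle, and hence its degree-one lift $\varphi$ is a strictly increasing homeomorphism of $\RR$. For the Lipschitz regularity, note that $\Gamma$ is contained in some compact strip $\TT\times[y_-,y_+]$, on which the $C^2$ map $\bar{x}$ has bounded partial derivatives; combining this with the Lipschitz constant $L$ of $P$ yields
\[
|\varphi(x)-\varphi(x_*)| \leq \bigl(\|\pd_x\bar{x}\|_\infty + L\,\|\pd_y\bar{x}\|_\infty\bigr)|x-x_*|.
\]
Applying the same construction to $S^{-1}$, which is also a diffeomorphism preserving $\Gamma$, produces a Lipschitz inverse for $\varphi$, so $\varphi$ is bi-Lipschitz. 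Rademacher's theorem then gives differentiability a.e.\ with $\varphi'>0$, and the two-sided Lipschitz bound translates into $0<\essinf_\TT\varphi'\leq\esssup_\TT\varphi'<\infty$; integrating $\varphi'$ between $x_*$ and $x$ recovers the displayed inequality on difference quotients.

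To obtain \eqref{dEL}, I would fix $x\in\RR$ and use the invariance of $\Gamma$: the orbit of $S$ through $(x,P(x))$ remains on the graph, so in the universal cover its first-coordinate sequence $(x_n)$ satisfies $x_0=x$, $x_1=\varphi(x)$, and $x_{-1}=\varphi^{-1}(x)$. Substituting into the Euler--Lagrange equation \eqref{acca} at index $n=0$ yields exactly \eqref{dEL}; since $x\in\RR$ was arbitrary, the identity holds everywhere.

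The main obstacle I foresee is the strict monotonicity of $\varphi$: Birkhoff's theorem alone only provides that $\Gamma$ is a Lipschitz graph, and to rule out folding or collapse of its image one has to combine the graph property with the orientation-preserving character of $S$ inherited from exactness. Once that structural point is secured, the remaining steps amount to bookkeeping involving the regularity of $S$ and $P$.
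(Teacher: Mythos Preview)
Your proposal is correct and follows essentially the same route as the paper: define $\varphi(x)=\pi_1\circ S(x,P(x))$ via Birkhoff's graph $P$, obtain bi-Lipschitz regularity by applying the same construction to $S^{-1}$, and derive \eqref{dEL} from the orbit characterisation \eqref{acca}. You supply a bit more detail than the paper (the explicit Lipschitz estimate via $\partial_x\bar x$, $\partial_y\bar x$ and the orientation-preserving argument for strict monotonicity), but there is no substantive difference in approach.
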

\begin{proof}
 By Birkhoff theorem there exists a Lipschitz function $P: \TT\rightarrow\RR$ such that
  \[
  \Gamma =\left\{(x,P(x)) \: :\: x\in\TT   \right\}.
  \]
From now on, with some abuse of notation, we will still denote with the same letter $S$ or $P$ the corresponding lifts. Since $P$ is Lipschitz and $\Gamma$ is invariant, the map $\varphi:\RR\rightarrow\RR$ defined as
  \begin{equation}\label{defi}
  \varphi(x) = \pi_1\circ S(x,P(x))
  \end{equation}
  is an increasing Lipschitz homeomorphism such that $\varphi(x+1)=\varphi(x)+1$ and
  \[
S(x,P(x) ) = (\varphi(x), P(\varphi(x)))\quad  \mbox{for all }x\in\RR.  
  \]
  Then, from \eqref{acca}, 
  \begin{equation*}
  h_2(\varphi^{-1}(x),x)+h_1(x,\varphi(x)) =0.
   \end{equation*} 
  Since $\varphi$ is Lipschitz-continuous it admits a derivative $\varphi'$ defined almost everywhere and its Lipschitz constant turns out to be equal to $\esssup_\TT \varphi'$. On the other hand, $\varphi^{-1}(x)= \pi_1\circ S^{-1}(x,P(x))$ is also Lipschitz and $(\varphi^{-1})'=\frac{1}{\varphi'\circ\varphi^{-1}}$ so that $\essinf_\TT \varphi'>0$. Summing up, $\varphi$ is a bi-Lipschitz homeomorphism such that, for every $x,x_*\in\TT$
  \[
0<\essinf_\TT \varphi'\leq \frac{\varphi(x)-\varphi(x_*)}{x-x_*}\leq\esssup_\TT \varphi' <\infty.
  \]
\end{proof}  

Given an invariant curve $\Gamma$ of $S$, we consider the corresponding homeomorphism $\varphi$ from Lemma \ref{lemfi} and introduce the functions $a,b : \RR\rightarrow\RR$ defined as
\begin{equation}
  \label{defab}
 a(x) = h_{22}(\varphi^{-1}(x),x)+h_{11}(x,\varphi(x)), \quad   b(x)= -h_{12}(\varphi^{-1}(x),x),
\end{equation}
where $h$ is the generating function introduced in Proposition \ref{prop_h}. Note that both functions are continuous and $1$-periodic and $b$ is strictly positive.

The following result gives a first necessary condition for the existence of invariant curves. Even if it was proven in \cite{mather_glancing} we give here the proof since we will need it later. 

\begin{theorem}
  \label{teomin}
  If $\Gamma$ is an invariant curve of $S$, then
  \[
a(x) >0 \quad \mbox{for all }x\in\RR. 
  \]
\end{theorem}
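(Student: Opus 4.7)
The plan is to exploit the variational characterisation of orbits on $\Gamma$: any such orbit is an action-minimiser, so the second variation of the discrete action along it must be positive semidefinite. From this, the strict positivity of $a$ will follow by testing the Hessian on a two-dimensional subspace of variations.

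The single nontrivial input is that every orbit on a rotational Lipschitz invariant graph minimises the action, i.e.\ writing $x_n=\varphi^n(x_0)$ and fixing $h<0<k$, the segment $(x_h,\dots,x_k)$ is a minimum of $H_{hk}$ among all sequences with the same endpoints $x_h$ and $x_k$. This is classical in Aubry--Mather / Bangert theory and I would invoke it rather than reprove it.

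Granted this, the Hessian of $H_{hk}$ at $(x_h,\dots,x_k)$ is computed directly from Proposition \ref{prop_h} and \eqref{defab}: one has $\partial^2 H_{hk}/\partial x_n^2 = h_{22}(x_{n-1},x_n)+h_{11}(x_n,x_{n+1}) = a(x_n)$ and $\partial^2 H_{hk}/\partial x_n\partial x_{n+1} = h_{12}(x_n,x_{n+1}) = -b(x_{n+1})$, all other mixed partials vanishing. Hence for any variation $(\xi_n)$ with $\xi_h=\xi_k=0$,
\[
\delta^2 H_{hk} = \sum_{n=h+1}^{k-1} a(x_n)\,\xi_n^2 - 2\sum_{n=h}^{k-1} b(x_{n+1})\,\xi_n\xi_{n+1},
\]
which must be non-negative. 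Testing with $\xi$ supported at a single interior index $n_0$ immediately gives $a(x_{n_0})\geq 0$.

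The decisive step is then to test with a variation supported at two consecutive interior indices: set $\xi_{n_0}=s$, $\xi_{n_0+1}=t$ and $\xi_n=0$ otherwise. Only one bilinear term survives and $\delta^2 H_{hk}$ reduces to the $2\times 2$ quadratic form with matrix
\[
\begin{pmatrix} a(x_{n_0}) & -b(x_{n_0+1}) \\ -b(x_{n_0+1}) & a(x_{n_0+1}) \end{pmatrix}.
\]
Positive semidefiniteness of this block, combined with $b(x_{n_0+1})>0$ (from $h_{12}<0$), forces $a(x_{n_0})\,a(x_{n_0+1})\geq b(x_{n_0+1})^2 > 0$. Together with $a(x_n)\geq 0$ already established, this rules out $a(x_{n_0})=0$ and yields $a(x_{n_0})>0$. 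Taking $n_0=0$ (with $h,k$ sufficiently large so that $0$ is interior) gives $a(x_0)>0$, and since $x_0$ was arbitrary we conclude $a(x)>0$ on all of $\RR$. The main obstacle is precisely the minimiser property of orbits on $\Gamma$: its proof is standard but branches on whether the rotation number of $\varphi$ is rational or irrational (dense orbit arguments versus periodic min--max), and I would simply cite Bangert or Mather rather than reproduce it.
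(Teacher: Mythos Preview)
Your argument is correct, but the paper takes a different and somewhat more elementary route. Instead of invoking the action-minimising property of orbits on $\Gamma$, the paper simply differentiates the discrete Euler--Lagrange relation $h_2(\varphi^{-1}(x),x)+h_1(x,\varphi(x))=0$ from Lemma~\ref{lemfi}. Since $\varphi$ is bi-Lipschitz this is legitimate almost everywhere, and yields the identity
\[
a(x)=b(\varphi(x))\,\varphi'(x)+\frac{b(x)}{\varphi'(\varphi^{-1}(x))}\qquad\text{a.e.}
\]
Then $b>0$ together with $0<\essinf\varphi'\leq\esssup\varphi'<\infty$ immediately gives $a(x)>0$ a.e., hence everywhere by continuity.

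The comparison is this: your proof is the second-variation argument in the MacKay--Percival spirit and is perfectly valid, but it rests on the nontrivial Aubry--Mather fact that orbits on a Lipschitz invariant graph are minimisers, which you correctly flag as something to cite. The paper's proof needs only Birkhoff's theorem (via the bi-Lipschitz property of $\varphi$), so it is logically lighter. More importantly for the paper's purposes, the differentiated identity above is exactly what is iterated in the proof of Theorem~\ref{teo_nec} to obtain the sharper bounds $D^\pm$ on $\varphi'$; so the paper's approach sets up the machinery for the next result, whereas your Hessian-block argument, while clean for the present statement, would not feed directly into that refinement.
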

\begin{proof}
  From an application of Lemma \ref{lemfi} we get a bi-Lipschitz homeomorphism $\varphi$ satisfying \eqref{dEL}. This last equation can be differentiated in almost every point leading to
    \begin{equation*}
    \label{deffi}
h_{22}(\varphi^{-1}(x),x)+h_{11}(x,\varphi(x)) = -h_{12}(x,\varphi(x))\varphi'(x)-h_{21}(\varphi^{-1}(x),x)\frac{1}{\varphi'(\varphi^{-1}(x))} \quad\mbox{a.e. }x\in\RR, 
    \end{equation*}
    that using \eqref{defab} and Proposition \ref{prop_h} becomes
  \begin{equation}
    \label{deffi}
a(x) = b(\varphi(x))\varphi'(x)+\frac{b(x)}{\varphi'(\varphi^{-1}(x))} \quad\mbox{a.e. }x\in\RR, 
    \end{equation}
and $b(x)>0$ for every $x\in\RR$. Hence, from Lemma \ref{lemfi} we get that 
\[
a(x) > b(\varphi(x))\essinf_\TT \varphi'+\frac{b(x)}{\esssup_\TT \varphi'} >0  \quad\mbox{a.e. }x\in\RR.
\]
The last inequality holds for all $x\in\RR$, hence by continuity we get that for every $x\in\RR$,
\begin{equation}
  \label{fine_inizio}
a(x) \geq b(\varphi(x))\essinf_\TT \varphi'+\frac{b(x)}{\esssup_\TT \varphi'} >0.  
\end{equation}
%from which we get the thesis noting that by the definition of $\varphi$, $\varphi(x_n)=x_{n+1}$ and $\varphi^{-1}(x_n)=x_{n-1}$ for orbits $(x_n,y_n)_{n\in\ZZ}$ of $S$  on the invariant curve $\Gamma$.  

\end{proof}
  
%As a consequence of the theorem, we have
%\begin{criterion}
%  \label{cri1}
%Let $(x_n,y_n)_{n\in\ZZ}$ be an orbit of $S$. If there exists $\bar{n}$ such that $a(x_{\bar{n}})<0$, then the orbit $(x_n,y_n)_{n\in\ZZ}$ does not lie on an invariant curve.
%\end{criterion}

Following the ideas in \cite{percivalmk,mkmeiss} Theorem \ref{teomin} can be improved.

\begin{theorem}\label{teo_nec}
Let $\Gamma$ be an invariant curve of $S$ and consider four positive constants $B^\pm,C^\pm$ such that 

  \begin{equation*}
    \begin{split}
       B^+ &\geq \max_{\RR} \frac{a(x)}{b(\varphi(x))}, \quad C^+ \leq \min_{\RR}\frac{b(x)}{b(\varphi(x))}, \\
      B^- &\geq \max_{\RR} \frac{a(x)}{b(x)}, \quad C^- \leq \min_{\RR}\frac{b(\varphi(x))}{b(x)},
    \end{split}
  \end{equation*}
  where the functions $a,b$ have been defined in \eqref{defab} and the function $\varphi$ comes from Lemma \ref{lemfi}. Suppose that 
  \begin{equation}
    \label{cond_pos}
    (B^\pm)^2-4C^\pm>0.
  \end{equation}
  Then, for every $x\in\RR$,
  \begin{equation*}%\label{nec}
  a(x) \geq b(\varphi(x))D^- +\frac{b(x)}{D^+},
  \end{equation*} 
  where
  \[
D^+ = \frac{1}{2}\left(B^+ +\sqrt{(B^+)^2 -4C^+}  \right), \quad D^- = \frac{1}{2C^-}\left(B^- -\sqrt{(B^-)^2 -4C^-}  \right).
  \]

\end{theorem}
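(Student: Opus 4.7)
The plan is to extract from the proof of Theorem~\ref{teomin} the a.e.\ identity
\[
a(x) = b(\varphi(x))\,\varphi'(x)+\frac{b(x)}{\varphi'(\varphi^{-1}(x))},
\]
in which both summands are non-negative since $b>0$ and $\varphi$ is increasing, and then bootstrap the crude pointwise bounds $\varphi'(x)\le a(x)/b(\varphi(x))\le B^+$ and $1/\varphi'(\varphi^{-1}(x))\le a(x)/b(x)\le B^-$ (obtained by dropping one summand at a time) into sharper a.e.\ bounds $\varphi'\le D^+$ and $\varphi'\ge D^-$ via a scalar fixed-point iteration. Plugging those sharper bounds back into the identity will then yield the claim after extension from a.e.\ to everywhere by continuity.

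For the upper bound, if $\varphi'\le B_n$ a.e., then $\varphi'(\varphi^{-1}(x))\le B_n$ too and the identity gives
\[
\varphi'(x)\le \frac{a(x)}{b(\varphi(x))}-\frac{1}{B_n}\cdot\frac{b(x)}{b(\varphi(x))}\le B^+-\frac{C^+}{B_n}=:B_{n+1}.
\]
I would then analyse the real dynamics of $g(B):=B^+-C^+/B$ on $(0,\infty)$: $g$ is increasing, its fixed points are the roots of $B^2-B^+B+C^+=0$, which are real and positive by the hypothesis $(B^+)^2>4C^+$, and $B-g(B)=(B^2-B^+B+C^+)/B$ is positive above the larger root $D^+$. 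Starting from $B_0=B^+\ge D^+$, the sequence $B_{n+1}=g(B_n)$ therefore decreases monotonically and, since $g$ is increasing with $g(D^+)=D^+$, stays above $D^+$, hence converges to $D^+$. This gives $\varphi'\le D^+$ a.e.

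Running the entirely symmetric iteration on $\nu(x):=1/\varphi'(\varphi^{-1}(x))$, which by the same identity satisfies $a(x)=b(\varphi(x))/\nu(\varphi(x))+b(x)\nu(x)$, with constants $B^-,C^-$ in place of $B^+,C^+$, produces $\nu\le E$ a.e.\ where $E=(B^-+\sqrt{(B^-)^2-4C^-})/2$; since the two roots of $E^2-B^-E+C^-=0$ multiply to $C^-$, one has $1/E=(B^--\sqrt{(B^-)^2-4C^-})/(2C^-)=D^-$, whence $\varphi'\ge D^-$ a.e. Inserting $\varphi'(x)\ge D^-$ and $\varphi'(\varphi^{-1}(x))\le D^+$ into the identity gives $a(x)\ge b(\varphi(x))D^-+b(x)/D^+$ a.e., and the continuity and $1$-periodicity of $a$, $b$ and $\varphi$ upgrade this to every $x\in\RR$. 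The delicate step is controlling the monotone scalar iteration: one must verify that $B_n$ stays strictly above $D^+$ so that the recursion remains well defined and does not stall at the smaller root of $g$, and this is precisely where the hypothesis $(B^\pm)^2>4C^\pm$ is used, namely to separate the two fixed points of $g$ and ensure convergence to the larger one.
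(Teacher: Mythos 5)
Your proposal is correct and reaches the same inequality via the same core algebra, but it reformulates the paper's convergence step in a way worth noting. The paper fixes an orbit $(x^0_n)$ in the full-measure good set and iterates the a.e.\ identity backwards along that orbit, producing the continued fraction
\[
\varphi'(x^0)\le B^+-\frac{C^+}{B^+-\dfrac{C^+}{\ddots -\dfrac{C^+}{\varphi'_{-k}}}},
\]
and then asserts (with the remark that it ``can be made rigorous'') that this continued fraction converges to the larger root $D^+$ of $d^2-B^+d+C^+=0$. Your version replaces the per-orbit continued fraction by a global scalar fixed-point iteration on the essential-sup bound: from the crude a.e.\ bound $\varphi'\le B_0:=B^+$, the identity and the invariance of null sets under the bi-Lipschitz map $\varphi$ propagate each bound $\varphi'\le B_n$ a.e.\ to $\varphi'\le B_{n+1}:=B^+-C^+/B_n$ a.e., and the elementary analysis of $g(B)=B^+-C^+/B$ (increasing, fixed points are the two roots, $g(B)<B$ above $D^+$) shows $B_n\downarrow D^+$. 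The symmetric iteration on $\nu=1/\varphi'\circ\varphi^{-1}$ gives $\varphi'\ge D^-$. This is the same recursion with the same limit as the paper's continued fraction, but your phrasing makes the convergence claim explicit and self-contained: the monotone decrease from the initial seed $B^+\ge D^+$ is exactly the argument the paper leaves implicit, and it cleanly sidesteps the need to worry about whether individual values $\varphi'_{-k}$ along an orbit could drift below the smaller fixed point $d_1$ of $g$ (one uses the uniform seed $B^+$ rather than the orbit-dependent values). The final step — inserting $\varphi'\ge D^-$ and $\varphi'\circ\varphi^{-1}\le D^+$ into the a.e.\ identity and upgrading to every $x$ by continuity — matches the paper's use of \eqref{fine}. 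So: same key lemma and same limiting constants, but your global sup-norm iteration is a tighter presentation of the convergence argument than the paper's formal continued fraction.
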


%As a consequence of the theorem, we have
%\begin{criterion}\label{cri2}
%  Let $(x_n,y_n)_{n\in\ZZ}$ be an orbit of $S$ and consider the notation introduced in Theorem \ref{teo_nec}. If there exists $\bar{n}$ such that
%  \begin{equation*}
%  a(x_{\bar{n}}) < b(x_{\bar{n}+1})D^- +\frac{b(x_{\bar{n}})}{D^+},
%  \end{equation*}
%  then the orbit $(x_n,y_n)_{n\in\ZZ}$ does not lie on an inv%ariant curve. 
%
%\end{criterion}

\begin{remark}
  \label{rem_C}
  Note that by Theorem \ref{teomin} and the twist condition the periodic and continuous functions $a(x)$ and $b(x)$ are strictly positive. Hence, the continuous functions
  \[
  \frac{a(x)}{b(\varphi(x))}, \quad \frac{a(x)}{b(x)}, \quad \frac{b(x)}{b(\varphi(x))}, \quad \frac{b(\varphi(x))}{b(x)}
  \]
  are periodic and attain a positive minimum on $\RR$. This implies that the constants $B^\pm$ are already positive and that the constants $C^\pm$ satisfying the desired estimates actually exist. 
  \end{remark}

\begin{remark}
Theorem \ref{teo_nec} yields Theorem \ref{teomin} in the limit case $D^-\rightarrow 0$, $D^+\rightarrow +\infty$. In the other cases, Theorem \ref{teo_nec} gives a stronger necessary condition since $\min_\RR b(x)>0$ and $D^-,D^+>0$.  
\end{remark}

\begin{remark}
  Let us consider the standard map
\begin{equation*}
%  \label{implicitP}
\left\{
\begin{split}
&\bar{x} = x + y - k\sin{x}  \\
&\bar{y} = y - k\sin{x},
\end{split}
\right. 
\end{equation*}
where $k>0$. One has that $h(x,\bar{x}) =\frac{1}{2}(\bar{x}-x)^2+k\cos{x}$, so that, given an invariant curve, $a(x) = 2-k \cos x$ and $b(x) =1$ for every $x\in\RR$. \\
To apply Theorem \ref{teomin} we note that $a(0) = 2-k <0 $ is satisfied for $k>2$. It means that there cannot exist invariant curves crossing the vertical line $x=0$ so there are not invariant curves at all.
\\
To apply Theorem \ref{teo_nec} we first note that we can take
\[
B^\pm = 2+k, \quad C^\pm=1,
\]
so that
\[
D^+ = 1+\frac{k}{2} + \sqrt{\frac{k^2}{4}+k }, \quad D^- = 1+\frac{k}{2} - \sqrt{\frac{k^2}{4}+k }=\frac{1}{D^+}.  
\]
We need to find $x\in\RR$ such that
\[
2-k\cos x < 2\left( 1+\frac{k}{2} - \sqrt{\frac{k^2}{4}+k }  \right). 
\]
As before choosing $x=0$ we have that
\[
2-k < 2\left( 1+\frac{k}{2} - \sqrt{\frac{k^2}{4}+k }  \right)
\]
is satisfied if $k>4/3$. As a consequence, for $k>4/3$ there are no invariant curves.
In this way, we recover the famous result by Mather \cite{mather_no}.

  \end{remark}

The rest of the section is dedicated to the proof of Theorem \ref{teo_nec}.

\begin{proof}[Proof of Theorem \ref{teo_nec}] Let us fix an invariant curve $\Gamma$ and consider the corresponding bi-Lipschitz homeomorphism $\varphi$ coming from Lemma \ref{lemfi}. Recalling equation \eqref{fine_inizio} we have
  \begin{equation}
    \label{fine}
a(x) \geq b(\varphi(x))\essinf_\TT \varphi'+ b(x)\frac{1}{\esssup_\TT \varphi'}, \qquad \mbox{for every }x\in\RR.
  \end{equation}
%  where $\varphi$ is defined in \eqref{defi}, is bi-Lipschitz and satisfies \eqref{deffi}.
  The purpose of the proof is to find explicitly two positive constants $D^-$ and $D^+$ such that $D^-\leq\essinf_\TT \varphi' $ and $D^+\geq\esssup_\TT \varphi' $.\\
  Let us denote by $\mathcal{N}_0$ the subset of $\TT$ of null measure on which $\varphi$ or $\varphi^{-1}$ are not differentiable. Hence the set $\mathcal{N} =\bigcup_{n\in\ZZ}\varphi^n(\mathcal{N}_0)$ has zero measure, and is such that every point $x^0\in\TT\setminus\mathcal{N}$ has the following property: both $\varphi$ and $\varphi^{-1}$ are differentiable along its orbit $(x^0_n)_{n\in\ZZ}$, where $(x^0_n)=\varphi^n(x^0)$.\\
  Consider $x^0\in\TT\setminus\mathcal{N}$ and denote
 % by $a_n^0=a(\varphi^n(x^0))$, $b_n^0=b(\varphi^n(x^0))$. Denoting also by
  $\varphi_n' = \varphi'(x^0_n)$. Since $\varphi$ satisfies equation \eqref{deffi}, recalling also its definition \eqref{defi}, we get for every $n$
  \begin{equation}
    \label{phiineq}
\varphi_n'=\frac{a(x_n^0)}{b(x_{n+1}^0)}-\frac{b(x_{n}^0)}{b(x_{n+1}^0)}\frac{1}{\varphi_{n-1}'} \leq B^+ -\frac{C^+}{\varphi_{n-1}'},
  \end{equation}
  where the last inequality comes from the definitions of $B^+,C^+$.
%  with
%  \[
%B^+ = \sup \frac{a_n}{b_{n+1}}, \quad C^+ = \inf\frac{b_{n}}{b_{n+1}},
%  \]
  %  where the supremum and infimum are taken over all orbits $(x_n,y_n)_{n\in\ZZ}$ on $\Gamma$ and $n\in\ZZ$.
  Hence, from \eqref{phiineq}
  \begin{align*}
\varphi'(x^0) \leq B^+ -\frac{C^+}{\varphi_{-1}'}     \leq B^+ - \frac{C^+}{B^+ -\frac{C^+}{\varphi_{-2}'}}\leq B^+ - \frac{C^+}{B^+ -\frac{C^+}{B^+ -\frac{C^+}{\varphi_{-3}'}}}\leq \dots
  \end{align*}
This gives an expression for the upper bound $D^+$ as a formal continued fraction. This procedure can be made rigorous, since from the hypothesis (see also Remark \ref{rem_C}) $B^+>0$, $C^+>0$ and $(B^+)^2-4C^+>0$. This implies that the continued fraction expansion converges to the largest solution of the quadratic equation
  \[
d^2-B^+d+C^+ =0
  \]
  that is
  \[
D^+ = \frac{1}{2}\left(B^+ +\sqrt{(B^+)^2 -4C^+}  \right).
  \]
We get $D^-$ in a similar way. From equation \eqref{deffi} we also have, for  $x^0\in\TT\setminus\mathcal{N}$ and for every $n$,
  \begin{equation}
    \label{phiineqm}
\varphi_n'=\left( \frac{a(x_{n+1}^0)}{b(x_{n+1}^0)}-\frac{b(x_{n+2}^0)}{b(x_{n+1}^0)}\varphi_{n+1}'\right)^{-1} \geq \left(B^- -C^-\varphi_{n+1}'\right)^{-1}.
    \end{equation}
%  with
%  \[
%B^- = \sup \frac{a_{n+1}}{b_{n+1}}, \quad C^- = \inf\frac{b_{n+2}}{b_{n+1}},
%  \]
  %  where the supremum and infimum are taken over all orbits $(x_n,y_n)_{n\in\ZZ}$ on $\Gamma$ and $n\in\ZZ$.
  Equation \eqref{phiineqm} gives the estimate of the lower bound $D^-$ in terms of the inverse of a continued fraction:
  \[
\varphi'(x^0)\geq \left(B^- -C^-\varphi_{1}'\right)^{-1} \geq \left(B^- -\frac{C^-}{B^--C^-\varphi_{2}'}\right)^{-1} \geq \left(B^- -\frac{C^-}{B^--\frac{C^-}{B^--C^-\varphi_{3}'}}\right)^{-1}  \geq\dots 
  \]
As before, the continued fraction converges to the  largest solution of the quadratic equation
  \[
d^2-B^-d+C^- =0,
  \]  
  and $D^-$ is the inverse of such root. Finally,
  \[
D^- = \frac{1}{2C^-}\left(B^- -\sqrt{(B^-)^2 -4C^-}  \right).
  \]
  Since neither $D^+$ nor $D^-$ depend on the orbit on $\Gamma$, we can plug them into \eqref{fine} and get the thesis.
\end{proof}

\section{Applications to the Tennis Map}\label{sec:applications}

We apply Theorems \ref{teomin} and \ref{teo_nec} to the Tennis Map $\Psi$ introduced before in \eqref{eq:unb}. The main result of the section is stated and proved in Proposition \ref{propmain}. 
Let us introduce the notation $\RR_{v_*}=\{v\in\RR \: :\: v>v_* \}$ and $\AA_{v_*} = \TT\times \RR_{v_*}$. We will denote the $\sup$ norm by $\norm{\cdot}$ and recall that $f\in C^3(\TT)$.

In Section \ref{sec:statement} we described how to define a bouncing motion for the Tennis Map $\Psi$. However, to apply the theorems, we need some regularity for the map. This is guaranteed by the following Lemma.

\begin{lemma}
  \label{well_def}
There exists $v_*>4\norm{\df}$ such the map $\Psi:\AA_{v_*}\rightarrow \AA$ is a $C^2$ embedding. 
\end{lemma}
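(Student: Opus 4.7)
The plan is to apply the implicit function theorem to the first component of \eqref{eq:unb}. Set
$$G(t,v,\bar t) := g(\bar t-t)-2v-2\df(t)+2f[t,\bar t],$$
so that the first equation of \eqref{eq:unb} reads $G(t,v,\bar t)=0$. Since $|f[t,\bar t]|\leq\norm{\df}$ by the mean value theorem, any solution with $\bar t>t$ must satisfy
$$\bar t-t\geq\frac{2}{g}\bigl(v-2\norm{\df}\bigr),$$
which is uniformly bounded away from zero as soon as $v>4\norm{\df}$. A direct computation gives
$$\partial_{\bar t}G=g+\frac{2\bigl(\df(\bar t)-f[t,\bar t]\bigr)}{\bar t-t},$$
and the numerator of the second term is bounded in absolute value by $4\norm{\df}$. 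Combining these estimates yields $\partial_{\bar t}G>0$ uniformly on $\AA_{v_*}$ once $v_*$ is chosen strictly larger than $4\norm{\df}$. Because $f\in C^3$, the implicit function theorem then produces $\bar t=\bar t(t,v)$ of class $C^2$ on $\AA_{v_*}$, and the second line of \eqref{eq:unb} immediately makes $\bar v$ of class $C^2$ as well.

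The immersion property follows at once: implicit differentiation of $G=0$ gives $\partial_v\bar t=2/\partial_{\bar t}G>0$, the (positive) twist condition, and together with exact symplecticity this forces the Jacobian of $\Psi$ to be nonsingular throughout $\AA_{v_*}$. Hence $\Psi$ is a local $C^2$ diffeomorphism.

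For global injectivity, and thus for the full embedding claim, I would run a symmetric backward analysis. Solving \eqref{veleq} for $w$ and substituting into \eqref{timeeq} produces, for $s=\bar t-t>0$, the implicit relation
$$s=\frac{2}{g}\bigl(\bar v-\df(\bar t)\bigr)+\frac{2}{g}f[\bar t-s,\bar t],$$
which is structurally identical to the forward equation with the roles of $(t,v)$ and $(\bar t,\bar v)$ swapped. The same IFT argument then determines a unique $C^2$ predecessor of any $(\bar t,\bar v)$ whose $\bar v$-coordinate exceeds the analogous threshold. Since the second line of \eqref{eq:unb} gives $|\bar v-v|\leq 4\norm{\df}$, enlarging $v_*$ if necessary (for instance taking $v_*>8\norm{\df}$) ensures $\bar v>4\norm{\df}$ on all of $\Psi(\AA_{v_*})$, delivering injectivity. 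The main technical delicacy is to confirm that the branch supplied by the implicit function theorem really is the smallest positive root of $G=0$: this is guaranteed by $F_t'(t)=v>0$ together with the strict monotonicity $\partial_{\bar t}G>0$ established above, which rules out any spurious root accumulating as $\bar t\to t^+$, and the upper bound $\bar t-t\leq\frac{2}{g}(v+2\norm{\df})$ confines the selected root to the regime where IFT applies.
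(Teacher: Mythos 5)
Your proof is correct and arrives at the same conclusion, but it takes a genuinely different route from the paper in two places, so the comparison is worth recording.

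For well-definedness and $C^2$ regularity, you apply the implicit function theorem to the scalar relation $G(t,v,\bar t)=0$ and recover $\bar v$ from the explicit second equation, whereas the paper applies the IFT to the vector-valued $F:\RR^2\times\RR^2\to\RR^2$. These are equivalent (the matrix $\Dif_{\bar t,\bar v}F$ in the paper is lower triangular with a $1$ in the corner, so its determinant is exactly your $1+\frac{2}{g}\partial_{\bar t}f[t,\bar t]=\frac{1}{g}\partial_{\bar t}G$), and your scalar reduction is arguably cleaner. Your identification of the IFT branch with the smallest root of $F_t$ is the right concern; a slightly crisper way to close it is to observe that $G=-\frac{2}{\bar t-t}F_t(\bar t)$ and that $G<0$ on $0<\bar t-t\le\frac{4}{g}\norm{\df}$ when $v>4\norm{\df}$, while $G$ is strictly increasing to $+\infty$ beyond that, so the root is actually unique, not merely the smallest.

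For global injectivity, your backward IFT argument is a genuinely different (and in my view more transparent) approach than the paper's. The paper instead passes to the variables $(t,w)$, applies the mean value theorem component-wise to $\tilde\Psi$, and shows that the resulting $2\times2$ linear system relating $(t_2-t_1,w_2-w_1)$ has determinant $1+O(1/\underline w)\ne 0$ for $\underline w$ large. Your backward relation $s=\frac{2}{g}\bigl(\bar v-\df(\bar t)\bigr)+\frac{2}{g}f[\bar t-s,\bar t]$ is structurally symmetric to the forward one, so the same monotonicity estimate on $s>\frac{4}{g}\norm{\df}$ gives uniqueness of the predecessor; since any forward image of a point in $\AA_{v_*}$ has $s>\frac{4}{g}\norm{\df}$ automatically, the $\bar v>4\norm{\df}$ enlargement is a conservative safety margin rather than a necessity, but it is harmless for an existence claim. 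A bonus of your route, which you do not exploit: the backward IFT hands you a $C^2$ inverse, which immediately yields both injectivity and, via the chain rule, nonsingularity of $\Dif\Psi$. That would let you drop the appeal to ``exact symplecticity'' in your immersion step, which as written is mildly circular (exact symplecticity is only established in Lemma~\ref{lemma:exact}, after this lemma, and the twist condition alone does not force a nonsingular Jacobian). The paper sidesteps this by computing $\det\Dif_{t,v}\Psi$ directly from the implicit function theorem formula. Either the direct determinant computation or your $C^2$-inverse argument is fine; the appeal to symplecticity should be replaced by one of them.
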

%}
\begin{proof}
%\textcolor{red}{We want to apply}
To prove that the map is well defined and $C^2$ we apply the implicit function theorem.  
%(see for example \cite[Lemma 25.4]{gole}).
To this aim we introduce the $C^{2}$ function $F :\{(\to,\vo,\Pto,\Pvo)\in \RR^2 \times \RR^2 \: :\:\to\neq\Pto   \} \rightarrow \RR^2$
given by

\begin{equation*}
%\textcolor{red}{
% \RR_{v_*} \times (\TT\times\RR) \ni}
 F(\to,\vo,\Pto,\Pvo):=
\left(
\begin{split}
& \Pto - \to - \frac 2g \vo + \frac 2g f[\to,\Pto]-\frac 2g \df (\to) \\
& \Pvo - \vo + 2f[\to,\Pto] - \df(\Pto)-\df(\to)
\end{split}
\right),
\end{equation*}
%We first prove that
%if  $v_* =4\norm{\df}_0$ then
%the map $\Psi:\AA_{v_*}\rightarrow P(\AA_{v_*})\subset \AA$ is well defined and $C^{2}$.
and compute   
\[
\Dif_{\Pto,\Pvo} F(\to,\vo,\Pto,\Pvo)=
\begin{pmatrix}
1 + \frac 2g \partial_{\Pto} f[\to,\Pto] & 0 \\
2\partial_{\Pto} f[\to,\Pto]-\ddf (\Pto) & 1
\end{pmatrix},
\]
where
\[
\partial_{\Pto}f[\to,\Pto]=\frac{\df(\Pto)-f[\to,\Pto]}{\Pto-\to} \,.
\]
Let us now consider a point satisfying
$F(\to_0,\vo_0,\Pto_0,\Pvo_0)=0$, with $(\to_0,\vo_0)\in \AA_{v_{**}}$ and $v_{**}=4\norm{\df}$.
Since $v_0>v_{**}$ we have
%\textcolor{red}{that
%  $\Pto_0 - \to_0 >\frac{4}{g}\norm{\df}_0$.} \textcolor{magenta}{(la primera
%  vez vamos a ser buenos y poner el calculo jeje)}
\[
%\textcolor{blue}{
\Pto_0 - \to_0 > \frac{2}{g} v_0 - \frac{2}{g}
\abs{f[\to_0,\Pto_0]+\df(\to_0)} > \frac{4}{g} \norm{\df}
\]
%\textcolor{red}{
%Hence,
and we check the condition of the implicit function theorem:
\[
%\textcolor{red}{D_{(\Pto,\Pvo)}\Phi(\to,\vo,\Pto,\Pvo)
%|_{(\to_0,\vo_0,\Pto_0,\Pvo_0)}}
\det (\Dif_{\Pto,\Pvo} F(\to_0,\vo_0,\Pto_0,\Pvo_0))
=1+\frac{2}{g}\partial_{\Pto}f[\Pto_0,\to_0]>1-
\frac{\frac{4}{g}\norm{\df}_0}{\Pto_0-\to_0}%>\frac{1}{3}
>0.
\]
Hence, 
we get a $C^{2}$ map $\Psi_0 : U_0 \rightarrow \AA$, defined in a
neighbourhood $U_0 \ni (\to_0,\vo_0)$
such that
$
(\Pto_0,\Pvo_0)=\Psi_0(\to_0,\vo_0)
$
and
$
F(\to,\vo,\Psi_0(\to,\vo))=0
$
for every $(\to,\vo) \in U_0$. Notice that the previous computations
do not depend on the chosen point $(\to_0,\vo_0)$, 
so the local map $\Psi_0$ extends to a global map $\Psi$
(the implicit function theorem can be applied in the full set $\AA_{v_{**}}$).
Moreover, using that $f[\Pto,\to]=f[\Pto+1,t+1]$ and the uniqueness of
the implicit function, we observe that the points 
$(\Pto_0,\Pvo_0)=\Psi(\to_0,\vo_0)$ and $(\Pto_1,\Pvo_1)=\Psi(\to_0+1,\vo_0)$
satisfy $\Pto_0+1=\Pto_1$ and $\Pvo_0=\Pvo_1$.

Moreover, one can easily check that $\Psi$ is a local diffeomorphism since
\[
\det \Dif_{\to,\vo} \Psi (t,v) =-\frac{\det (\Dif_{\to,\vo} F(\to,\vo,\Psi(\to,\vo) ))}{ \det (\Dif_{\Pto,\Pvo} F(\to,\vo,\Psi(\to,\vo) ))} \neq 0 \quad\mbox{on }\AA_{v_{**}}.
\]
To prove that $\Psi$ is an embedding we prove that it is injective in $\AA_{v_*}$ for $v_*$ eventually larger than $v_{**}$. It is convenient to use the variable $w=v+\dot{f}(t)$ and the conjugated map $\tilde{\Psi}$ given by (\ref{w1}-\ref{w2}). Suppose that there exist $(t_1,w_1)$ and $(t_2,w_2)$ such that $\tilde{\Psi}(t_1,w_1)=\tilde{\Psi}(t_2,w_2)=(\bar{t},\bar{w})$. From implicit differentiation of (\ref{w1}-\ref{w2}) we get that for large $w$
\[
\Dif_{\to,w} \tilde{\Psi} (t,w)
\begin{pmatrix}
1 + O(\frac{1}{w})  & \frac{2}{g}+ O(\frac{1}{w}) \\
2\ddf (\Pto) +O(\frac{1}{w}) & 1+\frac{4}{g}\ddf (\Pto)+ O(\frac{1}{w})
\end{pmatrix}.
\]
Applying the mean value theorem to both components of $\tilde\Psi$ we get the system
\begin{equation}
  \label{sysfin}
  \left\{
\begin{aligned}
  0&= \bar{t}(t_2,w_2)-\bar{t}(t_1,w_1) = \left( 1 +O\left(\frac{1}{\underline{w}}\right)  \right)(t_2-t_1) + \left( \frac{2}{g} +O\left(\frac{1}{\underline{w}}\right)  \right)(w_2-w_1)\\
  0&= \bar{w}(t_2,w_2)-\bar{w}(t_1,w_1) = \left( 2\ddf(\Pto_\xi) +O\left(\frac{1}{\underline{w}}\right)  \right)(t_2-t_1) + \left(1+ \frac{4}{g}\ddf(\Pto_\xi) +O\left(\frac{1}{\underline{w}}\right)  \right)(w_2-w_1),
\end{aligned}
\right.
\end{equation}
where $\underline{w}=\min(w_1,w_2)$ and
\[
\Pto_\xi = \Pto \left((1-\xi)t_2-\xi t_1, (1-\xi)w_2-\xi w_1\right),
\]
for some $\xi\in [0,1]$. We conclude noting that system \eqref{sysfin} has the only solution
\[
t_2-t_1 = w_2-w_1 =0
\]
for $w_2,w_1$ large enough since, from a direct computation, the determinant of the associated matrix is of the form $1 +O\left( \frac{1}{\underline{w}}\right)$. This concludes the proof coming back to the variables $(t,v)$. 

\end{proof}

\begin{remark}
  Note that we cannot guarantee that if $(\to_0,\vo_0) \in \AA_{v_*}$ then $\Psi(\to_0,\vo_0) \in \AA_{v_*}$. This is reasonable, since the ball can slow down decreasing its velocity at every bounce.
  %$Hence, in general, given $(\to_0,\vo_0) \in \AA_{v_*}$ we can only define a finite number $n_{max}$ of iterates of the map by the relation: 
 % \[
 % \Psi^{n_{max}} (\to_0,\vo_0) \in \AA_{v_*}, \quad\mbox{ but }\quad   \Psi^{n_{max}+1} (\to_0,\vo_0) \notin \AA_{v_*}
 % \]
  %and the same for backwards iterates.
  However, a bouncing motion is defined for $v\geq 0$.% defined in this section we will consider only orbits contained in $\AA_{v_*}$.
\end{remark}

\begin{remark}
  From the physical point of view, the condition $\Psi^n(\to_0,\vo_0)\in\AA_{v_*} $ for every $n$, implies that we can only hit the ball when it is falling. To prove it, suppose that $\to_0 =0$ and let us see what happens at the first iterate. The time at which the ball reaches its maximum height is $t^{max}=\frac{\vo_0}{g}$. On the other hand, the first impact time $\Pto$ satisfies,
  \[
\Pto \geq \frac{2}{g}\vo_0 - \frac{4}{g}\norm{\df}= t^{max}\left(         2-\frac{4}{\vo_0}\norm{\df} \right) > t^{max},
\]
where the last inequality comes from $\vo_0\in\RR_{v_*}$ and $v_*>4\norm{\df}$.
  \end{remark}

The variables time-velocity $(t,v)$ introduced before are not symplectic, so that we change to the variables time-energy 
$(\to,\Eo)$ defined by

\[
 (\to,\Eo) = \left(\to,\frac{1}{2}\vo^2\right),
\]
obtaining the conjugated map
\[
\Phi :\AA_{\Eo_*}
\longrightarrow  \AA, \qquad \Eo_*=\frac{1}{2}v_*^2
%\qquad \mapPP(\to,\Eo) = \Psi \circ
%\mapP \circ \Psi^{-1} (\to,\Eo)
\]
defined by
\begin{equation}\label{eq:unbe}
  \left\{
  \begin{split}
\Pto =  & \to + \frac 2g \sqrt{2\Eo}-\frac 2g f[\to,\Pto]+\frac 2g \df(\to)
\\
\PEo =  & \frac{1}{2}\left( \sqrt{2\Eo} - 2f[\to,\Pto] + \df (\Pto)+\df(\to) \right)^2,
\end{split}
\right.
\end{equation}
that by Lemma \ref{well_def} is a $C^{2}$-embedding.
%More precisely, there exist two $C^{k-1}$ functions $\tilde{T},\tilde{E} : \AA_{\Eo_*}\rightarrow \RR$ such that $\tilde{T}(\to+1,\Eo) =\tilde{T}(\to,\Eo)+1 $ and $\tilde{P}(\to,\Eo)=(\tilde{T}(\to,\Eo),\tilde{E}(\to,\Eo))$.

We have the following
\begin{lemma}
  \label{lemma:exact}
  The map $\Phi$ is exact symplectic and twist in $\AA_{e_*}$.
  The generating function is given by
  \begin{equation}
    \label{genfun}
    \begin{split}
      h(\to,\Pto)= &  \frac{g^2}{24}(\Pto-\to)^3+\frac{g}{2}(f(\Pto)+f(\to))(\Pto-\to)-\frac{(f(\Pto)-f(\to))^2}{2(\Pto-\to)}\\
      & -g\int^{\Pto}_{\to}f(s)ds+\frac{1}{2}\int^{\Pto}_{\to}
      (\dot{f}(s))^2ds,
    \end{split}
  \end{equation}
  where
  \[
  \Pto-\to\in\Omega =\{ (\to,\Pto)\in\RR^2 : \Pto>T(\to)\}
  \]
  for a $C^{2}$ function $T:\RR\rightarrow \RR$ such that $T(t+1)=T(t)+1$ and $\frac{2}{g}(v_*-2\norm{\df})<T(t)-t<\frac{2}{g}(v_*+2\norm{\df})$.\\
  Moreover, $\Phi$ preserves and twists infinitely the upper end.
  
\end{lemma}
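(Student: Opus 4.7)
My plan is to verify directly that the function $h$ in \eqref{genfun} is a generating function for $\Phi$ in the sense of Proposition \ref{prop_h}, and then read off the remaining structural claims from the explicit form of the map. The key preparatory observation is that if we set $\tau = \bar t - t$ and $F = f[t,\bar t]$, then on $\AA_{e_*}$ the defining relations of $\Phi$ are equivalent to the pair
\[
\sqrt{2e} = \frac{g\tau}{2} + F - \df(t), \qquad \sqrt{2\bar e} = \frac{g\tau}{2} - F + \df(\bar t),
\]
the second being obtained by subtracting twice the first equation of \eqref{eq:unbe} (times $g/2$) from the expression for $\sqrt{2\bar e}$. This realises $e$ and $\bar e$ as explicit $C^2$ functions of $(t,\bar t)$.

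Next I would compute $h_1$ and $h_2$ term by term, using $\partial_{\bar t}\tau = -\partial_t\tau = 1$, $\partial_{\bar t}F = (\df(\bar t)-F)/\tau$, $\partial_t F = (F-\df(t))/\tau$, and the fundamental theorem of calculus for the two integral terms. The only slightly delicate contribution, coming from $(f(\bar t)-f(t))^2/(2\tau)$, is handled by rewriting it as $F(f(\bar t)-f(t))/2$. After the expected pairwise cancellations one obtains
\[
h_2(t,\bar t) = \tfrac{1}{2}\!\left(\tfrac{g\tau}{2} - F + \df(\bar t)\right)^{\!2}, \qquad h_1(t,\bar t) = -\tfrac{1}{2}\!\left(\tfrac{g\tau}{2} + F - \df(t)\right)^{\!2},
\]
which by the preparatory step are exactly $\bar e$ and $-e$. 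Hence $\bar e\,d\bar t - e\,dt = dh$ on $\Omega$, giving exact symplecticity. For twist, implicit differentiation of $\sqrt{2e} = g\tau/2 + F - \df(t)$ in $e$ yields
\[
\frac{\partial \bar t}{\partial e} = \frac{2}{g\sqrt{2e}\,\bigl(1 + \tfrac{2}{g}\partial_{\bar t}F\bigr)},
\]
and since $|\partial_{\bar t}F| \leq 2\norm{\df}/\tau$ while on $\AA_{e_*}$ one has $\tau > (2/g)(v_* - 2\norm{\df}) > 4\norm{\df}/g$ by the choice $v_* > 4\norm{\df}$ from Lemma \ref{well_def}, the denominator is strictly positive. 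Thus $\partial \bar t/\partial e > 0$ (equivalently $h_{12} < 0$), which is the twist condition.

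For the domain description, the formula \eqref{defomega} with $\Sigma = \AA_{e_*}$ gives $\Omega = \{(t,\bar t) : \bar t \geq \bar t(t,e_*)\}$, so I would set $T(t) := \bar t(t,e_*)$. The implicit function theorem applied to the first equation of \eqref{eq:unbe} (whose derivative in $\bar t$ is the positive quantity $1 + (2/g)\partial_{\bar t}F$ computed above) shows $T \in C^2(\RR)$, and the 1-periodicity of $f$ combined with uniqueness of the implicit function gives $T(t+1) = T(t) + 1$. The identity $T(t) - t = (2/g)(v_* - F(t,T(t)) + \df(t))$ together with $|F|,|\df| \leq \norm{\df}$ delivers the claimed two-sided bounds on $T(t)-t$. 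Finally, the explicit formulas from the preparatory step give $\sqrt{2\bar e} \geq \sqrt{2e} - 4\norm{\df}$ and $\bar t - t \geq (2/g)(\sqrt{2e} - 2\norm{\df})$ uniformly in $t$, so $\bar e \to +\infty$ and $\bar t - t \to +\infty$ as $e \to +\infty$, proving preservation and infinite twisting of the upper end.

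The main obstacle is the algebraic verification in the middle step: it is purely mechanical but requires keeping careful track of how the cubic, the symmetric sum $f(\bar t)+f(t)$, the quadratic $F^2\tau$, and the two integrals conspire to reproduce the squares on the right-hand side. Once this identity is nailed down, everything else is a consequence of the explicit parametrisation in the first step together with the quantitative bound on $v_*$ from Lemma \ref{well_def}.
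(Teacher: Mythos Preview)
Your proposal is correct and follows essentially the same approach as the paper: you compute $h_1$ and $h_2$ directly to obtain the two perfect squares $-\tfrac12(\tfrac{g\tau}{2}+F-\df(t))^2$ and $\tfrac12(\tfrac{g\tau}{2}-F+\df(\bar t))^2$, identify these with $-e$ and $\bar e$ via the defining equations of $\Phi$, and then verify twist, the description of $\Omega$, and the behaviour at the upper end by the same estimates on $\partial_{\bar t}f[t,\bar t]$ and $\tau$ that the paper uses. The only difference is expository---you isolate the identities $\sqrt{2e}=\tfrac{g\tau}{2}+F-\df(t)$ and $\sqrt{2\bar e}=\tfrac{g\tau}{2}-F+\df(\bar t)$ at the outset, which streamlines the argument slightly but changes nothing of substance.
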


\begin{proof}
The proof basically comes from \cite{kunzeortega2}.
Inspired by \eqref{defomega}, consider the set
\[
\Omega=\left\{(\to,\Pto)\in\RR^2 : \Pto-\to > \frac{2}{g}\left( v_*- f[\to,\Pto]+ \df(\to)\right) \right\}
\]
 and the function $ h:\Omega \rightarrow \RR$ defined in \eqref{genfun}.\\
 Note that, by the implicit function theorem, the set $\Omega$ can be written as
\[
\Omega=\left\{(\to,\Pto)\in\RR^2 : \Pto>T(\to) \right\}
\]
 for a $C^{2}$ function $T:\RR\rightarrow \RR$ such that $T(t+1)=T(t)+1$ and $\frac{2}{g}(v_*-2\norm{\df})<T(t)-t<\frac{2}{g}(v_*+2\norm{\df})$. Moreover, $T(t)-t>0$ since $v_*>4\norm{\df}$.  

For every $(\to, \Pto)\in \Omega$, $h(\to+1,\Pto+1)=h(\to,\Pto)$ and 
\begin{equation}
\label{gen_interm}
  \left\{
\begin{split}
\partial_1 h(\to,\Pto)&=-\frac{1}{2}\left[\frac{g}{2}(\Pto -\to) +f[\to,\Pto]- \dot{f}(\to)    \right]^2 \\
\partial_2 h(\to,\Pto)&=\frac{1}{2}\left[\frac{g}{2}(\Pto -\to) -f[\to,\Pto]+ \dot{f}(\Pto)    \right]^2.
\end{split}
\right.
\end{equation}
Note that if $(\to,\Eo)\in\AA_{\Eo_*}$ then from \eqref{eq:unbe} we have $(\to,\Pto(\to,\Eo))\in \Omega$ so that 

  \begin{equation*}
  \left\{
\begin{split}
\partial_1 h(\to,\Pto(\to,\Eo)) &= -\frac{1}{2}\left[\frac{g}{2}(\Pto(\to,\Eo) -\to) +f[\to,\Pto(\to,\Eo)]- \dot{f}(\to)    \right]^2 \\
\partial_2 h(\to,\Pto(\to,\Eo)) &=   \frac{1}{2}\left[\frac{g}{2}(\Pto(\to,\Eo) -\to) -f[\to,\Pto(\to,\Eo)]+ \dot{f}(\Pto(\to,\Eo))    \right]^2,
\end{split}
\right.
\end{equation*}
and we can write

\begin{equation}
  \label{gen_imp}
\left\{
\begin{split}
  \partial_1 h(\to,\Pto(\to,\Eo))&=- \Eo
  %\frac{1}{2}\left[\frac{g}{2}(\Pto -\to) +f[\Pto,\to]- \dot{f}(\to)    \right]^2
  \\
\partial_2 h(\to,\Pto(\to,\Eo))&=\PEo(\to,\Eo).
%
%\frac{1}{2}\left[\frac{g}{2}(\Pto -\to) -f[\Pto,\to]+ \dot{f}(\Pto)    \right]^2.
\end{split}
\right.
\end{equation}
Hence, the map $\Phi: \AA_{e_*}\rightarrow\AA$ can be expressed in the implicit form (\ref{gen_imp}).
On the other hand, in $\Omega$,
\[
\partial_{12} h(\to,\Pto) = -\left[\frac{g}{2}(\Pto -\to) +f[\to,\Pto]- \dot{f}(\to)    \right]\left(\frac{g}{2}+\partial_{\Pto} f[\to,\Pto]     \right)<0,   
\]
where the inequality comes from two computations made in the proof of Lemma \ref{well_def}. Hence the system
\begin{equation*}
\left\{
\begin{split}
\partial_1 h(\to,\Pto)&=-\Eo \\
\partial_2 h(\to,\Pto)&=\PEo
\end{split}
\right.
\end{equation*}
defines implicitly two functions $\PEo(\to,\Eo),\Pto(\to,\Eo) : \AA_{e_*}\rightarrow \RR$ that by \eqref{gen_interm} satisfy \eqref{eq:unbe}.

Relation (\ref{gen_imp}) also shows that in $\AA_{e_*}$
\[
\begin{split}
  d (h(\to,\Pto(\to,\Eo))) &= \partial_2 h(\to,\Pto(\to,\Eo))d\Pto(\to,\Eo) + \partial_1 h(\to,\Pto(\to,\Eo)) d\to \\
  &=\PEo(\to,\Eo)d\Pto(\to,\Eo)-\Eo d\to.
  \end{split}
\]
The twist condition comes from implicit differentiation in \eqref{gen_imp}:
\[
\frac{\partial\Pto}{\partial e}(\to,\Eo) =-\frac{1}{\partial_{12} h(\to,\Pto(\to,\Eo))} >0.
\]
Finally, from \eqref{eq:unbe}, uniformly in $t$,
\[
\Pto-\to\rightarrow +\infty \quad\mbox{and}\quad \PEo\rightarrow +\infty
\]
as $e\rightarrow +\infty$.
%
%
%Using this relation, we have that equation (\ref{eq:unb}) are formally equivalent to 
%\begin{equation*}
%\left\{
%\begin{split}
%\partial_{\to} h(\to,\Pto)&=\frac{1}{2}\vo^2 \\
%\partial_{\Pto} h(\to,\Pto)&=-\frac{1}{2}\Pvo^2.
%\end{split}
%\right.
%\end{equation*}
%Hence, defining $\Eo=\frac{1}{2}v^2$ we can write
%\begin{equation}
%  \label{implicitP}
%\left\{
%\begin{split}
%\partial_{\to} h(\to,\Pto)&=\Eo \\
%\partial_{\Pto} h(\to,\Pto)&=-\PEo .
%\end{split}
%\right. 
%\end{equation}
%To say that this system is equivalent to $\mapPP (\to,\Eo)$ we must ensure that there exists a global solution of the relative implcit function problem. This can be accomplsihed if $\partial_{\to\Pto} h(\to,\Pto)>\delta_1>0$, for some $\delta_1$.
%Using $ \Pto-\to > \frac{4}{g}\norm{\df}_0+\delta$,
%\[
%\partial_{\to\Pto} h(\to,\Pto) = \left[\frac{g}{2}(\Pto -\to) +f[\Pto,\to]- \dot{f}(\to)    \right]\left[\frac{g}{2} -\partial_{\Pto}f[\Pto,\to] \right]>\delta+\frac{\delta g^2}{8\norm{\df}_0+2\delta g}
%\]
%so that we can write $\Pto =\Pto(\to,\Eo)$.
%This complete the proof since
%\[
%\PEo d\Pto - \Eo d\to = -\partial_{\Pto} h(\to,\Pto) d\Pto -\partial_{to} h(\to,\Pto) d\to = d(-h(\to,\Pto))
%\]
%
\end{proof}

%The diffeomrophism $\tilde{P}$ is defined on the half cylider $\TT \times \{e > e_*\}$. Using Theorem 8.1 in (CITE Mather-Forni), one can suppose that $\Psi$ is defined on the whole cylinder and satisfies the properties in Lemma \ref{lemma:exact}. We will be interested in what happens in $\TT \times \{e > e_*\}$. WE are going to find two conditions on $f$ that prevent the existence of invariant curves in $\TT \times \{e > e_*\}$.

%These will come from an application of corollary \ref{nec0} and theorem \ref{teo_nec}. As expected, the condition coming from the theorem will be better. 

We stated Theorems \ref{teomin} and \ref{teo_nec} for exact symplectic twist diffeomorphisms defined in the whole cylinder $\AA$. Hence, we need the following extension lemma (see for example \cite{maro3,maro2,maro1} and \cite[Theorem 8.1]{matherforni})
\begin{lemma}
  \label{extension}
There exists a $C^{2}$ exact symplectic and twist diffeomorphism $\tilde{\Phi}:\AA\rightarrow\AA$ such that $\tilde{\Phi} \equiv \Phi$ on $\AA_{e_*}$ and $\tilde{\Phi}\equiv\Phi_0$ on $\AA\setminus\AA_{\frac{e_*}{2}}$ where $\Phi_0$ is the integrable twist map $\Phi_0(t,e)=(t+e,e)$. Moreover, $\tilde{\Phi}$ preserves the ends of the cylinder and twists them infinitely.
\end{lemma}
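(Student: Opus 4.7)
The plan is to construct $\tilde\Phi$ by gluing $\Phi$ to $\Phi_0$ at the level of generating functions. Recall from Lemma \ref{lemma:exact} that $\Phi$ is generated on $\AA_{e_*}$ by the $C^{2}$ function $h(t,\bar t)$ of \eqref{genfun}, defined on $\Omega=\{\bar t>T(t)\}$ with $h_{12}<0$, while $\Phi_0(t,e)=(t+e,e)$ is generated globally on $\RR^{2}$ by
\[
h_0(t,\bar t)=\tfrac{1}{2}(\bar t-t)^{2},
\]
which satisfies $h_0(t+1,\bar t+1)=h_0(t,\bar t)$ and $h_{0,12}\equiv -1$. The idea is to interpolate between $h$ and $h_0$ via a smooth cutoff that is itself invariant under the diagonal translation $(t,\bar t)\mapsto(t+1,\bar t+1)$.

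Concretely, I would pick a smooth monotone $\chi:\RR\to[0,1]$, depending on $\bar t-t$, equal to $1$ above a threshold $\alpha_1$ and to $0$ below a threshold $\alpha_0<\alpha_1$, and set
\[
\tilde h(t,\bar t)=\chi(\bar t-t)\,h(t,\bar t)+\bigl(1-\chi(\bar t-t)\bigr)\,h_0(t,\bar t).
\]
The thresholds $\alpha_0,\alpha_1$ (and, if needed, the size of $v_*$) have to be adjusted so that $\{\bar t-t\geq\alpha_1\}$ contains the $(t,\bar t)$-preimage of $\AA_{e_*}$ under $\Phi$ and $\{\bar t-t\leq\alpha_0\}$ contains the preimage of $\AA\setminus\AA_{e_*/2}$ under $\Phi_0$, so that on those sets $\tilde h$ reduces to $h$ and $h_0$ respectively. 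With these choices $\tilde h$ is automatically $C^{2}$ and 1-periodic under $(t,\bar t)\mapsto(t+1,\bar t+1)$.

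The main obstacle is the twist condition $\tilde h_{12}<0$ on the transition strip $\alpha_0\leq\bar t-t\leq\alpha_1$. A direct computation using $\partial_1(\bar t-t)=-1$ and $\partial_2(\bar t-t)=1$ yields
\[
\tilde h_{12}=\chi\,h_{12}+(1-\chi)\,h_{0,12}+\chi'\bigl[(h_{0,2}-h_{2})+(h_{1}-h_{0,1})\bigr]+\chi''(h_0-h),
\]
with $\chi$ and its derivatives evaluated at $\bar t-t$. The convex-combination principal part $\chi h_{12}+(1-\chi)h_{0,12}$ is strictly negative and uniformly bounded away from zero on the transition strip by continuity, periodicity, and negativity of both $h_{12}$ and $h_{0,12}$; the correction terms scale as $\norm{\chi'}$ and $\norm{\chi''}$ times $C^{2}$-bounds of $h-h_0$ on the compact fundamental domain of the strip, so they can be made arbitrarily small by widening the transition window. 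Standard constructions along these lines (including variants using a cutoff in the energy variable to cope with the geometry of $\Omega$) are worked out in detail in the cited references \cite{maro3,maro2,maro1,matherforni}.

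Once $\tilde h_{12}<0$ is secured, $\tilde h$ is a global generating function of an exact symplectic $C^{2}$ diffeomorphism $\tilde\Phi$ of $\AA$, by the converse direction of Proposition \ref{prop_h}. The identifications $\tilde\Phi\equiv\Phi$ on $\AA_{e_*}$ and $\tilde\Phi\equiv\Phi_0$ on $\AA\setminus\AA_{e_*/2}$ are built into the placement of the cutoff. Preservation of the upper end and infinite twist there follow from the corresponding properties of $\Phi$ already noted in Lemma \ref{lemma:exact}; at the lower end they follow from the explicit form of $\Phi_0$, since $\bar e=e\to-\infty$ and $\bar t-t=e\to-\infty$ as $e\to-\infty$.
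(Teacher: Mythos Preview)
The paper itself gives no proof of this lemma; it simply refers to \cite{maro3,maro2,maro1} and \cite[Theorem 8.1]{matherforni}. Your sketch via a cutoff between generating functions is exactly the standard technique those references carry out, so the overall strategy matches what the paper invokes.

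There is, however, a scale mismatch that your single cutoff in $\bar t-t$ cannot absorb. For $\Phi_0(t,e)=(t+e,e)$ one has $\bar t-t=e$, whereas for $\Phi$ equation \eqref{eq:unbe} gives $\bar t-t=\tfrac{2}{g}\sqrt{2e}+O(\norm{\df})$. Your requirement that $\{\bar t-t\le\alpha_0\}$ contain the $\Phi_0$-preimage of $\{e\le e_*/2\}$ forces $\alpha_0\ge e_*/2=v_*^2/4$, while the requirement that $\{\bar t-t\ge\alpha_1\}$ contain the $\Phi$-preimage of $\AA_{e_*}$ (which is $\Omega$) forces $\alpha_1\le\min_t(T(t)-t)<\tfrac{2}{g}(v_*+2\norm{\df})$. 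Once $v_*$ is large---and Lemma \ref{well_def} generally needs it large for injectivity---one has $v_*^2/4>\tfrac{2}{g}(v_*+2\norm{\df})$ and no admissible pair $\alpha_0<\alpha_1$ exists; widening the transition window, as you suggest for controlling $\tilde h_{12}$, only makes this worse. The remedy in the cited constructions is either a two-stage interpolation (first from $h$ to its leading part $\tfrac{g^2}{24}(\bar t-t)^3$, which generates the integrable map $(t,e)\mapsto(t+\tfrac{2}{g}\sqrt{2e},e)$ with matching $\sqrt{e}$ scaling, and only then to $\Phi_0$) or an adjustment of the lower threshold in the statement. Your parenthetical remark about variants of the cutoff hints at this, but the one-step construction as written does not close.
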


We are going to prove that $\tilde{\Phi}$ has no invariant curves in $\AA_{e^*}$ for some $e^*>e_*$ large enough. To this aim, we need to bound the oscillations of possible invariant curves of $\tilde{\Phi}$ contained in $\AA_{\frac{e_*}{2}}$ and guarantee which ones are contained in $\AA_{e_*}$. Let us write $\tilde{\Phi}(t,e)=(\tilde{T}(t,e),\tilde{E}(t,e))$ and introduce  
\begin{equation}\label{maxtilde}
  \begin{split}
    E^* &:= \max \left\{ |\tilde{E}(t,e)-e| \: :\:(t,e)\in\TT\times\left[\frac{e_*}{2},e_*\right] \right\}, \\
    T^* &:= \max \left\{ \left|\tilde{T}(t,e)-t-\frac{2}{g}\sqrt{2\Eo}\right| \: :\:(t,e)\in\TT\times\left[\frac{e_*}{2},e_*\right] \right\}.
    \end{split}
\end{equation}

\begin{lemma}
\label{bound_curve}
  Let  $\Gamma\subset\AA_{\frac{e_*}{2}}$ be an invariant curve of ${\tPhi}$. Suppose that $\Gamma\cap\AA_{e^\sharp}\neq\emptyset$ for some $e^\sharp$ satisfying $\sqrt{2e^\sharp}>\sqrt{2e_*} +2\norm{\df}+g+\frac{g}{2}\max\left\{ \frac{4}{g}\norm{\df}, T^*\right\}$. Then, choosing $e^\flat$ satisfying $\sqrt{2e^\flat} =\sqrt{2\Eo^\sharp}-2\norm{\df}-g-\frac{g}{2}\max\left\{ \frac{4}{g}\norm{\df}, T^*\right\}$, we have 
  
  \[
\AA_{\Eo^\sharp}\subset\AA_{\Eo^\flat}\subset\AA_{\Eo_*},
  \]
and 
  \[
\Gamma\subset\AA_{\Eo^\flat}\subset\AA_{\Eo_*}.
  \]

  \end{lemma}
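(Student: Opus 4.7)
The plan is to establish an a priori oscillation bound on the velocity $v(t) := \sqrt{2P(t)}$ along any invariant curve $\Gamma=\{(t,P(t)) : t \in \TT\}\subset\AA_{e_*/2}$ of $\tilde\Phi$, of the form $v_{\max} - v_{\min} \leq 2\norm{\dot f} + g + \frac{g}{2}\max\{\frac{4}{g}\norm{\dot f}, T^*\}$. Granted this, the hypothesis $\Gamma \cap \AA_{e^\sharp} \neq \emptyset$ gives $v_{\max} \geq \sqrt{2 e^\sharp}$, whence $v_{\min} \geq \sqrt{2 e^\flat}$, i.e.\ $\Gamma \subset \AA_{e^\flat}$. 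The remaining inclusions $\AA_{e^\sharp} \subset \AA_{e^\flat} \subset \AA_{e_*}$ are immediate: $e^\flat < e^\sharp$ by construction of $e^\flat$, and $\sqrt{2 e^\flat} > \sqrt{2 e_*}$ is exactly equivalent to the assumed lower bound on $\sqrt{2e^\sharp}$.

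To get the oscillation bound I would first apply Lemma \ref{lemfi} to $\tilde\Phi$, obtaining an increasing bi-Lipschitz lift $\varphi:\RR\to\RR$ with $\varphi(t+1)=\varphi(t)+1$ such that $\tilde\Phi(t,P(t)) = (\varphi(t), P(\varphi(t)))$, and set $G(t):=\varphi(t)-t$. Since $\varphi$ is monotone and $\varphi(t+1)=\varphi(t)+1$, for any two lifts with $|t_a - t_b|\leq 1$ one has $|\varphi(t_a)-\varphi(t_b)|\leq 1$, hence the crude oscillation bound $|G(t_a)-G(t_b)|\leq 2$. Next, on the portion of $\Gamma$ lying in $\AA_{e_*}$ the map coincides with $\Phi$, and reading off the first line of \eqref{eq:unbe} yields $G(t) = \frac{2}{g}(v(t) + \dot f(t) - f[t,\varphi(t)])$, whence $|G(t) - \frac{2}{g}v(t)| \leq \frac{4}{g}\norm{\dot f}$; on the transition strip $\TT\times[e_*/2,e_*]$, the definition of $T^*$ in \eqref{maxtilde} gives directly $|G(t) - \frac{2}{g}v(t)| \leq T^*$.

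Finally, let $t^\pm$ denote a maximizer and a minimizer of $v$ on $\TT$. Since $v_{\max}\geq\sqrt{2e^\sharp}>v_*$, the point $(t^+, P(t^+))$ lies in $\AA_{e_*}$, so the sharper estimate $|G(t^+) - \frac{2}{g}v(t^+)| \leq \frac{4}{g}\norm{\dot f}$ applies at $t^+$; at $t^-$ I only have the weaker bound $\max\{\frac{4}{g}\norm{\dot f}, T^*\}$. Subtracting these two estimates and invoking $|G(t^+)-G(t^-)|\leq 2$ gives
\[
\tfrac{2}{g}(v_{\max}-v_{\min}) \leq 2 + \tfrac{4}{g}\norm{\dot f} + \max\left\{\tfrac{4}{g}\norm{\dot f},\, T^*\right\},
\]
which rearranges exactly to the claimed oscillation bound. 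The main obstacle is this asymmetric case analysis: separating the estimate at the maximizer (automatically in the $\Phi$ region, where the sharper $\frac{4}{g}\norm{\dot f}$ bound holds) from that at the minimizer (which may fall into the transition strip, forcing the use of $T^*$) is exactly what produces the specific combination of constants in the lemma; once this is carried out, the rest is routine manipulation.
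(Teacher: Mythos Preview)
Your argument is correct and follows essentially the same route as the paper. The only cosmetic difference is in how the key bound on the oscillation of $G(t)=\varphi(t)-t$ is obtained: the paper invokes the rotation number $\omega$ of the circle homeomorphism $\varphi$ and the standard inequality $|\varphi(t)-t-\omega|<1$ to compare $G$ at the high point and at a generic point, while you bypass $\omega$ and bound $|G(t_a)-G(t_b)|\le 2$ directly from monotonicity and $\varphi(t+1)=\varphi(t)+1$; the resulting constants coincide.
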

\begin{proof}
The first part comes directly from the hypothesis. Let us prove the second part.  Let $(t,e)\in\Gamma$ and denote $(\Pto,\PEo)=\tPhi(\to,\Eo)\in\Gamma$. It is known (see \cite{bangert}) that all the orbits on an invariant curve are minimal and have the same rotation number $\omega$. More precisely, there exists $\omega$ such that for every $(t,e)\in\Gamma$, we have
  \[
|\Pto - \to -\omega| < 1.
  \]
On the other hand, from \eqref{eq:unbe} and \eqref{maxtilde} for every $(\to,\Eo)\in\AA_{\frac{e_*}{2}}$ 
  \begin{equation*}
   %  \label{prima_est}
  \left|\Pto - \to-\frac{2}{g}\sqrt{2\Eo} \right|\leq\max\left\{ \frac{4}{g}\norm{\df}, T^*\right\}.
  \end{equation*}
  We deduce that
  \begin{equation}
     \label{prima_est}
\frac{2}{g}\sqrt{2\Eo}>\omega-1- \max\left\{ \frac{4}{g}\norm{\df}, T^*\right\}  \quad\mbox{for every } (\to,\Eo)\in\Gamma.
  \end{equation}
  By hypothesis, there exists a point $(\to^+,\Eo^+)\in\Gamma\cap\AA_{e^\sharp} $. As before we have that
 \[
|\Pto^+ - \to^+ -\omega| < 1, %\quad \mbox{for some }\omega^+ 
\]
and, since $\AA_{e^\sharp}\subset\AA_{e_*}$,
  \begin{equation*}
%     \label{prima_est}
  \left|\Pto^+ - \to^+-\frac{2}{g}\sqrt{2\Eo^+} \right|\leq \frac{4}{g}\norm{\df},
  \end{equation*}   
  from which
  \[
\omega >\frac{2}{g}\sqrt{2\Eo^+}- \frac{4}{g}\norm{\df}-1 >\frac{2}{g}\sqrt{2\Eo^\sharp}- \frac{4}{g}\norm{\df}-1.
  \]
  Plugging this last inequality into \eqref{prima_est} we get that if $(t,e)\in\Gamma$, then
  \[
  \sqrt{2\Eo} > \sqrt{2\Eo^\sharp}-2\norm{\df}-g-\frac{g}{2}\max\left\{ \frac{4}{g}\norm{\df}, T^*\right\} >\sqrt{2\Eo_*},
  \]
  that is
  \[
  \sqrt{2\Eo} > \sqrt{2\Eo^\flat} >\sqrt{2\Eo_*},
  \]
  from which, squaring, we get the thesis.

\end{proof}

Let us start computing the corresponding functions $a(t),b(t)$ defined in \eqref{defab}, and the constants $D^\pm$ from Theorem \ref{teo_nec}. Note that, from Lemma \ref{extension}, the diffeomorphism $\tPhi$ satisfies the hypothesis of Lemma \ref{lemfi}. Hence, given an invariant curve $\Gamma$, the homeomorphism $\varphi$ and the functions $a$ and $b$ are well defined. Moreover, we recall that by Birkhoff Theorem every point $(t,e)\in\Gamma$ is of the form $(t,P(t))$ for a Lipschitz 1-periodic function $P$.   
\begin{lemma}
\label{tech}
Let $\Gamma\subset\AA_{e_*}$ be an invariant curve of $\tPhi$. We have, for every $(t,e)\in\Gamma$,
\begin{equation*}
  \begin{split}
    a(t) &= \sqrt{2\Eo}\left(g+2\ddot{f}(\to) +  R^A \right), \qquad |R^A| \leq \left(\frac{16(g+3\norm{\ddf})}{\sqrt{2}} \right)\frac{\norm{\df}}{\sqrt{\Eo}},
    \\
    b(t) & = \sqrt{2\Eo}\left(\frac{g}{2} + R^B   \right), \qquad |R^B| \leq \left(\frac{(7g+2\norm{\ddf})}{\sqrt{2}} \right)\frac{\norm{\df}}{\sqrt{\Eo}},
    \\
    b(\varphi(t)) & = \sqrt{2\Eo}\left(\frac{g}{2} + R^{\tilde{B}}   \right), \qquad |R^{\tilde{B}}| \leq \left(\frac{(5g+2\norm{\ddf})}{\sqrt{2}} \right)\frac{\norm{\df}}{\sqrt{\Eo}}.
  \end{split}
  \end{equation*}
Moreover, if we denote $M=\max\ddf$ and $\underline{\Eo}=\min\{ \Eo > \Eo_* \: :\: (t,e)\in\Gamma \}$ we can choose, as  $\underline{\Eo}\rightarrow\infty$
\begin{align*}
  D^+ &=\frac{ g+2M+2\sqrt{M^2 +gM} }{g} + O\left(\frac{1}{\sqrt{\underline{\Eo}}}\right), \\
  D^-& =\frac{ g+2M-2\sqrt{M^2 +gM} }{g} + O\left(\frac{1}{\sqrt{\underline{\Eo}}}\right).
 \end{align*}

\end{lemma}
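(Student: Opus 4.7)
The plan is to reduce the first three formulas to a direct calculation with the generating function $h$ in \eqref{genfun}, and then extract the asymptotics of $D^\pm$ by plugging the result into Theorem \ref{teo_nec}. First I would differentiate the expressions \eqref{gen_interm} for $h_1$ and $h_2$ once more to obtain closed expressions for the three second partials of $h$. Writing $\tau=\bar t-t$ and $\Delta f=f(\bar t)-f(t)$, and using that the first brackets of \eqref{gen_interm} are exactly $\sqrt{2\Eo}$ and $\sqrt{2\PEo}$ in view of \eqref{gen_imp}, a short computation gives
\[
h_{11}(t,\bar t)=\sqrt{2\Eo}\Bigl[\tfrac{g}{2}+\ddf(t)+\tfrac{\df(t)}{\tau}-\tfrac{\Delta f}{\tau^2}\Bigr],
\]
\[
h_{12}(t,\bar t)=-\sqrt{2\Eo}\Bigl[\tfrac{g}{2}+\tfrac{\df(\bar t)}{\tau}-\tfrac{\Delta f}{\tau^2}\Bigr],
\]
\[
h_{22}(t,\bar t)=\sqrt{2\PEo}\Bigl[\tfrac{g}{2}+\ddf(\bar t)-\tfrac{\df(\bar t)}{\tau}+\tfrac{\Delta f}{\tau^2}\Bigr].
\]

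Second, on an invariant curve $\Gamma\subset\AA_{\Eo_*}$ with consecutive iterates $(t_n,e_n)$, adding and subtracting the identities $\sqrt{2e_n}=\tfrac{g}{2}\tau_n+\Delta f_n/\tau_n-\df(t_n)$ and $\sqrt{2e_{n+1}}=\tfrac{g}{2}\tau_n-\Delta f_n/\tau_n+\df(t_{n+1})$ (with $\tau_n=t_{n+1}-t_n$ and $\Delta f_n=f(t_{n+1})-f(t_n)$), together with $|\Delta f_n|\leq\norm{\df}\tau_n$, yields the uniform bounds $|\sqrt{2e_{n+1}}-\sqrt{2e_n}|\leq 4\norm{\df}$ and $\bigl|\tau_n-\tfrac{2}{g}\sqrt{2e_n}\bigr|\leq\tfrac{4}{g}\norm{\df}$ all along $\Gamma$. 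Substituting the first-step formulas at $(t_0,t_1)$ and $(t_{-1},t_0)$ into $a(t_0)=h_{11}(t_0,t_1)+h_{22}(t_{-1},t_0)$, $b(t_0)=-h_{12}(t_{-1},t_0)$ and $b(\varphi(t_0))=-h_{12}(t_0,t_1)$, the two $\sqrt{2e_0}(g/2+\ddf(t_0))$ leading terms of $a$ combine into the claimed $\sqrt{2e_0}(g+2\ddf(t_0))$, while for $b$ and $b\circ\varphi$ the leading term is a single $\sqrt{2e_{\pm 1}}\cdot g/2$. All remaining contributions — the $\df/\tau$ and $\Delta f/\tau^2$ corrections, together with the replacement of $\sqrt{2e_{\pm 1}}$ by the common prefactor $\sqrt{2e_0}$ — are controlled by the second-step bounds and collected into $R^A$, $R^B$, $R^{\tilde B}$.

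Finally, for the asymptotics of $D^\pm$ the first half of the lemma gives, uniformly on $\Gamma$,
\[
\frac{a(t)}{b(\varphi(t))}=2+\frac{4\ddf(t)}{g}+O\bigl(1/\sqrt{\underline{\Eo}}\bigr),\qquad \frac{b(t)}{b(\varphi(t))}=1+O\bigl(1/\sqrt{\underline{\Eo}}\bigr),
\]
and the analogous expansions with the roles of $b$ and $b\circ\varphi$ swapped, so one may take $B^\pm=2+4M/g+O(1/\sqrt{\underline{\Eo}})$ and $C^\pm=1+O(1/\sqrt{\underline{\Eo}})$ in Theorem \ref{teo_nec}. A direct computation then gives $(B^\pm)^2-4C^\pm=16(M^2+gM)/g^2+O(1/\sqrt{\underline{\Eo}})$, and substituting into the explicit formulas for $D^+$ and $D^-$ and expanding the square roots produces exactly the two displayed leading expressions. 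The main obstacle throughout is the careful numerical book-keeping required to recover the stated explicit constants in $R^A$, $R^B$, $R^{\tilde B}$: one must track both the $1/\tau$-type corrections appearing directly in the second partials of $h$ and the $|\sqrt{2e_{\pm 1}}-\sqrt{2e_0}|$ discrepancy produced by factoring out $\sqrt{2e_0}$, ensuring that their combined effect does not worsen the stated polynomial dependence on $\norm{\df}$ and $\norm{\ddf}$.
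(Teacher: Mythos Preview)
Your proposal is correct and follows essentially the same route as the paper: compute the second partials of $h$ from \eqref{gen_interm}, use the map equations \eqref{eq:unbe} to control $\tau_n$ and $\sqrt{2e_{n\pm 1}}-\sqrt{2e_n}$, collect the $1/\tau$-type corrections into the remainders, and then read off $B^\pm,C^\pm$ to obtain the asymptotics of $D^\pm$ from Theorem~\ref{teo_nec}. The only organizational difference is that you factor out $\sqrt{2e}$ at the outset via \eqref{gen_imp}, whereas the paper keeps the prefactor $\tfrac{g}{2}(t_{1}-t_{-1})$ (resp.\ $\tfrac{g^2}{4}(t-t_{-1})$) and converts it to $\sqrt{2e}$ afterwards; one small slip in your write-up is that for $b(\varphi(t_0))=-h_{12}(t_0,t_1)$ your own formula already gives the prefactor $\sqrt{2e_0}$ (not $\sqrt{2e_1}$), so no replacement is needed there, which is precisely why the constant in $R^{\tilde B}$ is smaller than in $R^B$.
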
  
\begin{proof}
 Let us denote $\tPhi^n(t,e)=(\to_n,\Eo_n)$. Since $\Gamma\subset\AA_{e_*}$ we have that $\tPhi\equiv\Phi$ so that $(\to_n,\Eo_n)_{n\in\ZZ}$ is given by equations \eqref{eq:unbe} and we can consider the generating function $h$ defined in \eqref{genfun}. Moreover, from the definition of $\varphi$ in \eqref{defi}, we have $t_1=\varphi(t)$ and $t_{-1}=\varphi^{-1}(t)$.  
  
From \eqref{gen_interm} and using that
\[
\partial_{\to}f[\to,\to_1]=\frac{-\df(\to)+f[\to,\to_1]}{\to_1-\to}, \quad
\partial_{\Pto}f[\to,\to_1]=\frac{\df(\to_1)-f[\to,\to_1]}{\to_1-\to},
\]
we have
\begin{equation*}
  \begin{split}
    h_{11}(\to,\to_1)& = \frac{g}{2}(\to_1 -\to)\left(\frac{g}{2}+\ddot{f}(\to) \right) + \left(\partial_{\to}f[\to,\to_1]-\ddot{f}(\to)\right)\left(\dot{f}(\to) - f[\to,\to_1]\right),
    \\
    h_{22}(\to,\to_1)& = \frac{g}{2}(\to_1 -\to)\left(\frac{g}{2}+\ddot{f}(\to_1) \right) + \left(\partial_{\Pto}f[\to,\to_1]-\ddot{f}(\to_1)\right)\left( f[\to,\to_1] - \dot{f}(\to_1) \right),
     \\
    h_{12}(\to,\to_1)& = -\frac{g^2}{4}(\to_1 -\to) + \partial_{\Pto}f[\to,\to_1]\left(\dot{f}(\to) -f[\to,\to_1]  \right) -\frac{g}{2}  \left( 2f[\to,\to_1] + \dot{f}(\to_1)- \dot{f}(\to) \right).
  \end{split}
  \end{equation*}
These formulas give,
\begin{equation*}
  \begin{split}
    a(t) &= h_{22}(\to_{-1},\to)+h_{11}(\to,\to_{1}) = \frac{g}{2}(\to_{1} -\to_{-1})\left(\frac{g}{2}+\ddot{f}(\to) \right) + R^a, \\
    b(t) & =- h_{12}(\to_{-1},\to) = \frac{g^2}{4}(\to -\to_{-1})+R^b,
  \end{split}
  \end{equation*}
with
\[
|R^a| \leq 8\norm{\df}\norm{\ddf}, \quad  |R^b| \leq 2\norm{\df}(\norm{\ddf}+2g). 
\]
%
%
%so that, if $\ddf(\bar\xi) <-\frac{g}{2}$
%
%\begin{align}
%F''(\bar\xi)\leq \frac{g}{2}(\to_2 -\to_{0})\left(\frac{g}{2}+\ddot{f}(\bar\xi) \right)+ 8\norm{\df}\norm{\ddf} <0
%  \end{align}
%if (note that $\frac{g}{2}+\ddot{f}(\bar\xi)<0$)
%\[
%\to_2 -\to_{0} > - \frac{32\norm{\df}\norm{\ddf}}{ g\left(g+2\ddot{f}(\bar\xi) \right)}.
%\]
%This last condition is satisfied for $v_0$ large enough. Actually, from \eqref{eq:unb} we have
Moreover, from \eqref{eq:unbe} we have
\begin{align*}
  \to_{1}-\to_{-1} &= \frac{4}{g}\sqrt{2\Eo_{-1}}+\frac{4}{g}\left(\df(\to_{-1})+\df(\to)\right)-\frac{6}{g}f[\to_{-1},\to]-\frac{2}{g}f[\to,\to_{1}] \\
  & =\frac{4}{g}  \sqrt{2\Eo_{-1}}\left(1 + R^1 \right), \quad  |R^1| \leq \frac{4}{\sqrt{2\Eo_{-1}}}\norm{\df}, \\
   \to-\to_{-1} &= \frac{2}{g}\sqrt{ 2\Eo_{-1}}\left(1 + R^2 \right), \quad |R^2|\leq \frac{2}{\sqrt{2\Eo_{-1}}}\norm{\df}, 
\end{align*}
then, using also that
\[
\sqrt{2\Eo_{-1}}=\sqrt{2\Eo}\left(1+R^3) \right), \quad |R^3|\leq \frac{4}{\sqrt{2\Eo}}\norm{\df},
\]
we have
\begin{equation*}
  \begin{split}
    a(t)&= \sqrt{2\Eo}\left(g+2\ddot{f}(\to) + R^A  \right),  \quad |R^A|\leq \frac{16(g+3\norm{\ddf})}{\sqrt{2\Eo}}\norm{\df},  \\
    b(t) & = \sqrt{2\Eo}\left(\frac{g}{2} +  R^B  \right),  \quad |R^B|\leq \frac{(7g+2\norm{\ddf})}{\sqrt{2\Eo}}\norm{\df}, \\
    b(t_{1}) & = \sqrt{2\Eo}\left(\frac{g}{2} +  R^{\tilde{B}}  \right),  \quad |R^{\tilde{B}}|\leq \frac{(5g+2\norm{\ddf})}{\sqrt{2\Eo}}\norm{\df}. 
  \end{split}
  \end{equation*}
Now, to compute $D^\pm$ we first search for the expressions of $B^\pm$ and $C^\pm$. Since the curve $\Gamma$ is the graph of a Lipschitz (periodic) function, 
\[
|R^A|,|R^B|,|R^{\tilde{B}}| = O\left(\frac{1}{\sqrt{\underline{\Eo}}}\right) \quad\mbox{as } \underline{\Eo}\rightarrow\infty,
\]
and we have
\begin{align*}
 \frac{a(t)}{b(\varphi(t))}&= \frac{a(t)}{b(t_{1})}=\frac{2}{g}\left(g+2\ddf(t)\right)+O\left(\frac{1}{\sqrt{\underline{\Eo}}}\right) \leq\frac{2}{g}\left(g+2M\right)+O\left(\frac{1}{\sqrt{\underline{\Eo}}}\right), \\
  \frac{b(t)}{b(\varphi(t))}&= \frac{b(t)}{b(t_{1})}= 1+O\left(\frac{1}{\sqrt{\underline{\Eo}}}\right) ,
\end{align*}
and similar estimates hold for $ \frac{a(t)}{b(t)}$ and $\frac{b(\varphi(t))}{b(t)}$. Since these estimates are uniform on $\Gamma$, we can choose
%where $\underline{\Eo}=\inf_n \Eo_n > \Eo_*  $ and $K_1$ is a positive constant depending on $\norm{\ddf}$. With these estimates and denoting with $K_i$  positive constants depending on $\norm{\ddf}$   
\[
B^\pm =\frac{2}{g}\left(g+2M\right)+O\left(\frac{1}{\sqrt{\underline{\Eo}}}\right) , \qquad C^\pm= 1+O\left(\frac{1}{\sqrt{\underline{\Eo}}}\right),
\]
for which conditions \eqref{cond_pos} easily holds for $\underline{\Eo}\rightarrow \infty$ and the expressions of $D^\pm$ follow from a straightforward computation. 

\end{proof}

We are now ready to prove that Theorem \ref{teomin} gives the following

\begin{proposition}
  Let $m:=\min\ddf $ and suppose that $m < -\frac{g}{2}$. Then, if there exists an invariant curve $\Gamma\subset \AA_{\frac{e_*}{2}}$ of $\tPhi$ then,
  \[
 \Gamma\subset\AA_{\frac{e_*}{2}}\setminus\AA_{e^*}  \subset \AA_{\frac{e_*}{2}},
  \]
  with
  \[
\sqrt{2e^*} = \sqrt{2e_*}+ \norm{\df}\frac{16(g+3\norm{\ddf})}{-(g+2m)} + 2\norm{\df}+g+\frac{g}{2}\max\left\{ \frac{4}{g}\norm{\df}, T^*\right\}.
  \]

\end{proposition}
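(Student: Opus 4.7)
The plan is a proof by contradiction based on Theorem \ref{teomin}. Suppose $\Gamma \subset \AA_{e_*/2}$ is an invariant curve of $\tPhi$ with $\Gamma \cap \AA_{e^*} \neq \emptyset$. The first step is to verify that the explicit threshold $e^*$ in the statement is large enough to invoke Lemma \ref{bound_curve} with $e^\sharp = e^*$: indeed, by definition
\[
\sqrt{2e^*} - \left(2\norm{\df} + g + \tfrac{g}{2}\max\left\{\tfrac{4}{g}\norm{\df}, T^*\right\}\right) = \sqrt{2e_*} + \norm{\df}\frac{16(g+3\norm{\ddf})}{-(g+2m)} > \sqrt{2e_*},
\]
where $g+2m < 0$ by hypothesis makes the added term strictly positive. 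Lemma \ref{bound_curve} then yields $\Gamma \subset \AA_{e^\flat}$ with $\sqrt{2e^\flat} = \sqrt{2e_*} + \norm{\df}\frac{16(g+3\norm{\ddf})}{-(g+2m)}$; in particular $\Gamma \subset \AA_{e_*}$, so $\tPhi \equiv \Phi$ along $\Gamma$ and Lemmas \ref{lemfi} and \ref{tech} apply.

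Next, pick $t_0 \in \TT$ with $\ddf(t_0) = m$ (the minimum is attained by periodicity and continuity). Since $\Gamma$ is a graph by Birkhoff's theorem, there is a unique $(t_0, e_0) \in \Gamma$, and $e_0 > e^\flat$. By Lemma \ref{tech},
\[
a(t_0) = \sqrt{2e_0}\bigl(g + 2m + R^A\bigr), \qquad |R^A| \le \frac{16(g+3\norm{\ddf})\norm{\df}}{\sqrt{2e_0}}.
\]
Using $\sqrt{2e_0} > \sqrt{2e^\flat} > \norm{\df}\frac{16(g+3\norm{\ddf})}{-(g+2m)}$, we conclude $|R^A| < -(g+2m)$, hence
\[
g + 2m + R^A \le g + 2m + |R^A| < 0,
\]
so $a(t_0) < 0$, contradicting Theorem \ref{teomin}. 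Therefore $\Gamma \cap \AA_{e^*} = \emptyset$ and $\Gamma \subset \AA_{e_*/2} \setminus \AA_{e^*}$, as claimed.

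The main obstacle is purely bookkeeping: one must choose $e^*$ so that two compatibility conditions hold simultaneously, namely (i) the downward correction in Lemma \ref{bound_curve} still leaves the curve above $\AA_{e_*}$ (needed to use Lemma \ref{tech} at all), and (ii) the resulting floor $\sqrt{2e^\flat}$ is at least $\norm{\df}\,16(g+3\norm{\ddf})/(-(g+2m))$, which is exactly the value making the error term $R^A$ in Lemma \ref{tech} smaller in absolute value than $|g+2m|$. The displayed expression for $\sqrt{2e^*}$ in the statement is precisely the sum of these two thresholds, so once the algebra is unpacked as above the contradiction is immediate.
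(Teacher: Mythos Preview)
Your proof is correct and follows essentially the same approach as the paper: assume an invariant curve meets $\AA_{e^*}$, use Lemma~\ref{bound_curve} to force $\Gamma\subset\AA_{e^\flat}\subset\AA_{e_*}$, then apply Lemma~\ref{tech} at the point $t_0$ where $\ddf$ attains its minimum to obtain $a(t_0)<0$, contradicting Theorem~\ref{teomin}. The bookkeeping you describe is exactly the paper's, including the identification of $\sqrt{2e^\flat}$ with the threshold needed to control $R^A$.
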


\begin{proof}
  Suppose that there exists an invariant curve such that $\Gamma\cap\AA_{e^*}\neq\emptyset$. Since
  \begin{equation*}
    \begin{split}
      \sqrt{2e^*}- 2\norm{\df}-g-\frac{g}{2}\max\left\{ \frac{4}{g}\norm{\df}, T^*\right\} &= \sqrt{2e_*}+ \norm{\df}\frac{16(g+3\norm{\ddf})}{-(g+2m)} > \sqrt{2e_*},
     % =\norm{\df}\frac{16(g+3\norm{\ddf})}{-g+2|m|} \\
     % &\geq \norm{\df}\frac{16(g+3\norm{\ddf})}{-g+2\norm{\ddf}}>4\norm{\df} = \sqrt{2e_*} 
      \end{split}
  \end{equation*}
  we can apply Lemma \ref{bound_curve} and get that $\Gamma\subset\AA_{e^\flat}$ with
  \[
\sqrt{2e^\flat} = \sqrt{2e_*}+ \norm{\df}\frac{16(g+3\norm{\ddf})}{-(g+2m)}>\sqrt{2e_*}.
  \]
We show how to get a contradiction with Theorem \ref{teomin}. Let $t_0$ be such that $m = \ddf(t_0)$ and consider the corresponding point $(t_0,e_0)\in\Gamma$. By Lemma \ref{tech},
  \begin{equation*}
    a(t_0) = \sqrt{2\Eo_{0}}\left(g+2\ddot{f}(\to_0) +  R_0^A \right), \qquad |R_0^A| \leq 16(g+3\norm{\ddf})\frac{\norm{\df}}{\sqrt{2\Eo_{0}}}.
\end{equation*}
 Then
$a(t_0) = \sqrt{2\Eo_{0}}\left(g+2m +  R_0^A \right) $ is negative if $g+2m +  R_0^A<0$. This happens if $g+2m<0$ and
\begin{equation*}
|R_0^A|\leq 16(g+3\norm{\ddf})\frac{\norm{\df}}{\sqrt{2\Eo_{0}}} < -(g+2m),
\end{equation*}
that is 
\[
\sqrt{2e_0} >  \norm{\df}\frac{16(g+3\norm{\ddf})}{-(g+2m)}. 
\]
By Theorem \ref{teomin} there cannot exist  invariant curves crossing the vertical line $ \left\{(t_0,e)\: :\: \sqrt{2\Eo}>  \norm{\df}\frac{16(g+3\norm{\ddf})}{-(g+2m)} \right \} $. Since $\sqrt{2e^\flat}> \norm{\df}\frac{16(g+3\norm{\ddf})}{-(g+2m)}$, this is a contradiction with $\Gamma\subset\AA_{e^\flat}$ . 
  
\end{proof}

The main result of this section comes from an application of Theorem \ref{teo_nec}.

\begin{proposition}
\label{propmain}
  Suppose that
\[
m< -\frac{g}{   1+\sqrt{1 +\frac{g}{M}}  }.
\]
 Then there exists $e^*$, with $\sqrt{2e^*}>\sqrt{2e_*} +g+2\norm{\df}+\frac{g}{2}\max\left\{ \frac{4}{g}\norm{\df}, T^*\right\} $ such that if there exists an invariant curve $\Gamma\subset \AA_{\frac{e_*}{2}}$ of $\tPhi$ then,
  \[
 \Gamma\subset \AA_{\frac{e_*}{2}}\setminus\AA_{e^*}  \subset \AA_{\frac{e_*}{2}}.
  \]
\end{proposition}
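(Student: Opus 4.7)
The strategy is parallel to the proof of the previous proposition, replacing Theorem \ref{teomin} by the sharper Theorem \ref{teo_nec}. I would argue by contradiction: assume that $\tPhi$ admits an invariant curve $\Gamma\subset\AA_{\frac{e_*}{2}}$ with $\Gamma\cap\AA_{e^*}\neq\emptyset$. A direct application of Lemma \ref{bound_curve} with $e^\sharp=e^*$ confines the whole curve to $\AA_{e^\flat}\subset\AA_{e_*}$, where $\sqrt{2e^\flat}=\sqrt{2e^*}-2\norm{\df}-g-\frac{g}{2}\max\{\tfrac{4}{g}\norm{\df},T^*\}$. By enlarging $e^*$, the threshold $e^\flat$, and hence $\underline{\Eo}:=\min\{\Eo:(t,\Eo)\in\Gamma\}\geq e^\flat$, can be pushed as high as needed.

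Next, I would choose $t_0\in\TT$ realising $\ddf(t_0)=m$ and let $(t_0,e_0)\in\Gamma$ be the corresponding point of the curve. Since $\Gamma\subset\AA_{e_*}$ one has $\tPhi\equiv\Phi$ along $\Gamma$, so Theorem \ref{teo_nec} applies and at $t_0$ gives
\[
a(t_0)\;\geq\; b(\varphi(t_0))\,D^- + \frac{b(t_0)}{D^+}.
\]
Feeding in the asymptotics of $a$, $b$, $b\circ\varphi$ provided by Lemma \ref{tech} and dividing through by $\sqrt{2e_0}$, the left-hand side becomes $g+2m+O(1/\sqrt{\underline{\Eo}})$ while each of the two $b$-factors contributes $\tfrac{g}{2}+O(1/\sqrt{\underline{\Eo}})$.

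The crux of the argument is the algebraic identity $D^+D^-=1$ in the limit $\underline{\Eo}\rightarrow\infty$. Indeed, with $B^\pm=\tfrac{2(g+2M)}{g}+o(1)$ and $C^\pm=1+o(1)$ from Lemma \ref{tech}, a direct computation yields
\[
D^+D^-=\frac{(g+2M)^2-4(M^2+gM)}{g^2}+o(1)=1+o(1),
\]
so $1/D^+=D^-+o(1)$, and the inequality above collapses to
\[
g+2m\;\geq\;gD^-+o(1)\;=\;g+2M-2\sqrt{M^2+gM}+o(1).
\]

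Finally, a short algebraic manipulation (multiply and divide by $M+\sqrt{M(M+g)}$, using that $M>0$ since $\ddf$ has zero mean over a period and $m<0$) shows that the hypothesis $m<-g/(1+\sqrt{1+g/M})$ is \emph{equivalent} to the strict inequality $m<M-\sqrt{M^2+gM}$, i.e.\ $g+2m<gD^-_\infty$ where $D^-_\infty:=\lim_{\underline{\Eo}\rightarrow\infty}D^-$. Choosing $e^*$ large enough that the $o(1)$ error is dominated by the positive gap $gD^-_\infty-(g+2m)$ then produces the desired contradiction. The main obstacle I foresee is merely the bookkeeping: one must control the various remainders from Lemma \ref{tech} uniformly along $\Gamma$ (this is why the enlargement of $e^*$ matters). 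The key structural point is the symmetry $D^+D^-=1$, which is what fuses the two summands $b(\varphi(t_0))D^-$ and $b(t_0)/D^+$ into a single $gD^-$; without this cancellation one would only recover the cruder criterion $m<-g/2$ of the previous proposition.
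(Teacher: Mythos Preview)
Your proposal is correct and follows essentially the same route as the paper: contradiction via Lemma \ref{bound_curve} to push $\underline{\Eo}$ high, evaluation of the necessary condition of Theorem \ref{teo_nec} at the point $t_0$ where $\ddf(t_0)=m$, and the asymptotics of Lemma \ref{tech} to reduce everything to $g+2m<g+2M-2\sqrt{M^2+gM}+o(1)$. Your explicit remark that $D^+D^-=1+o(1)$ is exactly what the paper encodes when it writes $D^-,\,1/D^+=\tfrac{g+2M-2\sqrt{M^2+gM}}{g}+O(1/\sqrt{\underline{\Eo}})$; making this identity visible is a nice clarification but not a different argument.
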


\begin{proof}
  The proof goes as before applying Theorem \ref{teo_nec} instead of Theorem \ref{teomin}. Fix $t_0$ such that $m = \ddf(t_0)$. We claim (and prove later) that there exists $e^+$ satisfying
  \[
\sqrt{2e^+}>\sqrt{2e_*} +g+2\norm{\df}+\frac{g}{2}\max\left\{ \frac{4}{g}\norm{\df}, T^*\right\}
  \]
 and  such that there are no invariant curves crossing the vertical line $ \mathcal{L} =\{(t_0,e)\: :\: \sqrt{2\Eo}>\sqrt{2e^+}  \}$. This gives the thesis choosing $e^*$ such that $\sqrt{2e^*}>\sqrt{2e^+}+g+2\norm{\df}+\frac{g}{2}\max\left\{ \frac{4}{g}\norm{\df}, T^*\right\}   $. Actually, any invariant curve $\Gamma$ satisfying $\Gamma\cap\AA_{e^*}\neq\emptyset$ would satisfy, by Lemma \ref{bound_curve}, $\Gamma\subset\AA_{e^+}$ and then cross the line $\mathcal{L}$.

We now go back to the proof of the claim.  Fix an invariant curve $\Gamma\subset\AA_{\frac{e_*}{2}}$ and pick the orbit $(t_n,e_n)_{n\in\ZZ}$ through $(t_0,e_0)$. We show that for $e_0$ large
\begin{equation}
  \label{necc}
a(t_0)< b(t_{1})D^-+\frac{b(t_0)}{D^+},
\end{equation}
contradicting Theorem \ref{teo_nec}.
  By Lemma \ref{bound_curve}, denoting $\underline{\Eo}= \min \{\Eo \: :\: (\to,\Eo)\in\Gamma  \}$ we have
\begin{equation}
\label{bo1}
\sqrt{2\underline{\Eo}}>\sqrt{2e_0}-g-2\norm{\df}-\frac{g}{2}\max\left\{ \frac{4}{g}\norm{\df}, T^*\right\}.
\end{equation}
From Lemma \ref{tech} and using \eqref{bo1} we can write, as $\sqrt{2\underline{\Eo}}\rightarrow\infty$
\begin{align*}
  a(t_0) &= \sqrt{2\Eo_{0}}\left(g+2m + O\left(\frac{1}{\sqrt{\underline{\Eo}}}\right) \right), \\
  b(t_0),b(t_1) &=\sqrt{2\Eo_0}\left( \frac{g}{2}+ O\left(\frac{1}{\sqrt{\underline{\Eo}}}\right)   \right), \\
  D^-,\frac{1}{D^+} &= \left(\frac{ g+2M-2\sqrt{M^2 +gM} }{g} +O\left(\frac{1}{\sqrt{\underline{\Eo}}}\right)  \right). 
\end{align*}
Hence, \eqref{necc} is satisfied if
\[
g+2m < g+2M-2\sqrt{M^2 +gM}  +O\left(\frac{1}{\sqrt{\underline{\Eo}}}\right),  
\]
that is
\[
m<M-\sqrt{M^2 +gM}  +O\left(\frac{1}{\sqrt{\underline{\Eo}}}\right),
\]
or equivalently
\begin{equation}
  \label{fin}
m <- \frac{g}{   1+\sqrt{1 +\frac{g}{M}}  } +O\left(\frac{1}{\sqrt{\underline{\Eo}}}\right).
\end{equation}
Since by hypothesis
\[
m <- \frac{g}{   1+\sqrt{1 +\frac{g}{M}}  },
\]
condition \eqref{fin} is satisfied for $\underline{\Eo}$ (and therefore $e_0$) large enough.

\end{proof}

\section{Diffusive orbits and chaotic dynamics}\label{sec:chaos}

In this Section we describe how our main result, Theorem \ref{teo_main}, follows from Proposition \ref{propmain}. In the following we consider the cylinder $\AA_{e^*}$ coming from Proposition \ref{propmain}.

%We first note that using  some results from \cite{maro3,maro2} there exists a $C^{k-1}$ exact symplectic and twist diffeomorphism $\tilde{\Phi}:\AA\rightarrow\AA$ such that $\tilde{\Phi} \equiv \Phi$ on $\AA_{e^*}$ and $\Phi\equiv\Phi_0$ on $\AA\setminus\AA_{e^*-1}$ where $\Phi_0$ is the integrable twist map $\Phi_0(t,e)=(t+e,e)$. For future application, let us write $\tilde{\Phi}(t,e)=\tilde{T}(t,e),\tilde{E}(t,e)$ and introduce 
%\[
%f = \max \left\{ \tilde{E}(t,e)-e \: :\:(t,e)\in\TT\times[e^*-1,e^*] \right\}. 
%\]

We have the following
\begin{lemma}
  \label{lemmabo}
 % We claim that
  %for every $y^->e^*+4\norm{\df}$ and
  For every $K>0$ there exists an orbit $(\hat{\to}_n,\hat{\Eo}_n)_{n\in\ZZ}$ of $\tilde{\Phi}$ such that
  %$\inf \hat{\Eo}_n\leq y^-$ and
  $\sup \hat{\Eo}_n -\inf \hat{\Eo}_n \geq K$. Moreover, $\frac{e_*}{2}<\inf \hat{\Eo}_n$. 
\end{lemma}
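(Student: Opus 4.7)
The plan is to combine Proposition \ref{propmain} with the contrapositive of the ``Birkhoff-type'' second part of Theorem \ref{birk}, applied to the extended map $\tPhi$ on the whole cylinder $\AA$.

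Fix $K>0$ and set $y^-=e^*+1$, $y^+=e^*+K+1$, so that $\TT\times(y^-,y^+)\subset\AA_{e^*}$. By Proposition \ref{propmain}, $\tPhi$ has no invariant curve contained in $\AA_{e^*}$, hence none inside $\TT\times(y^-,y^+)$. If every orbit $(t_n,e_n)_{n\in\ZZ}$ of $\tPhi$ with $e_0<y^-$ satisfied $e_n<y^+$ for all $n$, Theorem \ref{birk} would produce an invariant curve in this strip, a contradiction. Hence there is an orbit $(\hat t_n,\hat e_n)_{n\in\ZZ}$ with $\hat e_0<y^-$ and $\hat e_N\geq y^+$ for some $N\in\ZZ$, so
\[
\sup_{n\in\ZZ}\hat e_n-\inf_{n\in\ZZ}\hat e_n\geq y^+-y^-=K.
\]

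For the lower bound on $\hat e_n$, Lemma \ref{extension} gives $\tPhi\equiv\Phi_0$ on $\AA\setminus\AA_{e_*/2}$, where $\Phi_0(t,e)=(t+e,e)$. In particular $\TT\times\{e_*/2\}$ is a rotational invariant curve of $\tPhi$, and since $\tPhi$ preserves both ends of the cylinder, the open upper component $\AA_{e_*/2}$ is $\tPhi$-invariant. Because our orbit contains the point $(\hat t_N,\hat e_N)$ with $\hat e_N>e_*/2$, the whole orbit lies in $\AA_{e_*/2}$, i.e.\ $\hat e_n>e_*/2$ for every $n\in\ZZ$.

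The main obstacle I anticipate is promoting this to the strict inequality $e_*/2<\inf_n\hat e_n$, since the orbit could a priori accumulate on the invariant circle $\TT\times\{e_*/2\}$ from above. To rule that out, I would interpose a rotational invariant curve $\Gamma_0$ of $\tPhi$ lying strictly above $\TT\times\{e_*/2\}$: on a thin strip $\TT\times[e_*/2,e_*/2+\delta]$ the map $\tPhi$ is a $C^2$-small perturbation of the completely integrable $\Phi_0$ (they coincide on the lower boundary), so KAM theory supplies invariant curves of $\tPhi$ at Diophantine rotation numbers just above $e_*/2$. Any such $\Gamma_0$ is the graph of a Lipschitz function bounded below by some $e_0>e_*/2$, and splits $\AA$ into two $\tPhi$-invariant components. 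Since $\hat e_N>e^*>e_0$, the orbit lies in the upper component of $\AA\setminus\Gamma_0$, yielding $\inf_n\hat e_n\geq e_0>e_*/2$ and completing the proof.
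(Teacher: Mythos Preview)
Your argument for the main inequality is correct and coincides with the paper's: apply Proposition~\ref{propmain} together with the contrapositive of the second part of Theorem~\ref{birk} to produce an orbit with $\sup\hat e_n-\inf\hat e_n\ge K$. For the lower bound, the paper simply observes that $\tPhi$ equals the integrable map $\Phi_0$ on $\{e\le e_*/2\}$; hence every horizontal circle there is invariant, and the orbit (which reaches the region $e>e^*$) stays in $\{e>e_*/2\}$. This yields $\hat e_n>e_*/2$ for every $n$, and that is exactly what is used downstream (see the first line of the proof of Lemma~\ref{lemmabo2}).

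Your additional KAM step, aimed at upgrading this to the literal strict inequality $\inf_n\hat e_n>e_*/2$, is not in the paper and is shaky as written: the extension $\tPhi$ is only guaranteed to be $C^2$ (Lemma~\ref{extension}), whereas Moser-type twist theorems require higher regularity, so one cannot invoke KAM without further assumptions on the extension. The paper's one-line justification does not literally deliver the strict inequality on the infimum either; the intended content is that the orbit lies in $\AA_{e_*/2}$, i.e.\ $\hat e_n>e_*/2$ for all $n$, which you had already obtained from the invariant circle $\TT\times\{e_*/2\}$ without any KAM input.
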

\begin{proof}
  Suppose by contradiction that there exists $K>0$   such that every orbit of $\tilde\Phi$   is such that $\sup_{n\in\ZZ^+}e_n- \inf_{n\in\ZZ^+}e_n < K$.
  Then for every initial condition $(t_0,e_0)\in\AA$ with $e_0$ below a level $y^->e^*$ satisfies, for every $n$, $\Eo_n < y^- +K:= y^+$. Therefore the second part of Birkhoff Theorem \ref{birk} gives the existence of an invariant curve for $\tilde{\Phi}$ in $\AA_{e^*}$  contradicting Proposition \ref{propmain}. The last part of the statement comes from the fact that the map $\tilde\Phi$ is the integrable map for $e<\frac{e_*}{2}$. 
\end{proof}

The orbit $(\hat{\to}_n,\hat{\Eo}_n)_{n\in\ZZ}$ of $\tPhi$ could not be an orbit of $\Phi$ since it may be not contained in $\AA_{e_*}$. However we have

\begin{lemma}\label{lemmabo2}
  
Consider a constant $E>2\norm{\df}\left(2\sqrt{2(e_*+E^*)}+4\norm{\df}\right)$ where $E^*$ was defined in \eqref{maxtilde}. There exist two integers $n^-<n^+$ such that  $|\hat{e}_{n^+} -\hat{e}_{n^-}|\geq E$ and $\hat{e}_n> e_*$ for every integer $n\in[n^-,n^+]$.
\end{lemma}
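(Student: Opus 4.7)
The plan is to invoke Lemma \ref{lemmabo} with $K$ sufficiently large (a quantity I will fix in terms of $E$, $E^*$, and $e_*$) to obtain an orbit $(\hat{t}_n,\hat{e}_n)_{n\in\ZZ}$ of $\tPhi$ with $\inf\hat{e}_n > e_*/2$ and $\sup\hat{e}_n - \inf\hat{e}_n \geq K$, and then to extract from this orbit a sub-arc confined to $\{\hat{e}>e_*\}$ along which the energy varies by at least $E$. Concretely, I would choose $K > \tfrac{3}{2}e_* + E^* + 2E + 1$ so that $\sup\hat{e}_n > e_* + E^* + 2E$, and then pick an index $N^+$ with $\hat{e}_{N^+}$ close enough to this supremum that $\hat{e}_{N^+} > e_* + E^* + 2E$.

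If $\inf\hat{e}_n \geq e_*$, the entire orbit already lies in $\{\hat{e}>e_*\}$ and any pair $n^-<n^+$ realizing a gap of at least $E$ works. Otherwise some index has $\hat{e}_n\leq e_*$, and I would split into two sub-cases according to whether such an index occurs at some $q<N^+$ or only at some $q>N^+$.

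In the first sub-case, let $q$ be the largest integer less than $N^+$ with $\hat{e}_q\leq e_*$ and set $n^-=q+1$, $n^+=N^+$; maximality of $q$ gives $\hat{e}_n>e_*$ throughout $[n^-,n^+]$. Since $\hat{e}_q\in(e_*/2,e_*]$ sits in the transition strip where $\tPhi$ is interpolated, the definition \eqref{maxtilde} of $E^*$ yields $\hat{e}_{n^-}\leq \hat{e}_q + E^*\leq e_*+E^*$, and subtracting from $\hat{e}_{n^+}>e_*+E^*+2E$ produces a gap exceeding $E$; the case $q=N^+-1$ is ruled out because the same transition bound would then force $\hat{e}_{N^+}\leq e_*+E^*$. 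In the second sub-case, let $q$ be the smallest integer greater than $N^+$ with $\hat{e}_q\leq e_*$, set $n^-=N^+$, $n^+=q-1$, so that $\hat{e}_{q-1}>e_*$ and the step $q-1\mapsto q$ is governed by the physical map $\Phi$. The one-step estimate $|v_q-v_{q-1}|\leq 4\norm{\df}$ from \eqref{eq:unb}, combined with $v_q\leq\sqrt{2e_*}$, gives
\[
\hat{e}_{q-1}-\hat{e}_q = \tfrac{1}{2}(v_{q-1}+v_q)(v_{q-1}-v_q) \leq 2\norm{\df}\bigl(2\sqrt{2e_*}+4\norm{\df}\bigr) < E
\]
by the hypothesis on $E$, so $\hat{e}_{n^+}<e_*+E$ and the gap $\hat{e}_{n^-}-\hat{e}_{n^+}$ again exceeds $E$; the same estimate rules out $q=N^++1$.

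The main obstacle I expect is simply the bookkeeping: keeping straight which one-step estimate controls the entry into the low region (the bound $E^*$ coming from Lemma \ref{extension}) versus the exit from above $e_*$ to below (the physical estimate, whose precise form dictates the value of $E$ appearing in the hypothesis), and committing at the outset to a $K$ large enough that $\sup\hat{e}_n$ comfortably dominates $e_*+E^*+2E$. No new analytic input beyond Lemma \ref{lemmabo}, Lemma \ref{extension}, and the one-step velocity/energy estimates from \eqref{eq:unb} is required.
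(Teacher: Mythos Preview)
Your argument is correct and uses the same three ingredients as the paper: Lemma~\ref{lemmabo}, the one-step bound $E^*$ on the transition strip $[e_*/2,e_*]$, and the physical one-step energy bound coming from $|\bar v-v|\le 4\norm{\df}$. The organization, however, differs. You anchor on a single high-energy index $N^+$ (with $\hat e_{N^+}>e_*+E^*+2E$) and look for the \emph{nearest} dip below $e_*$ on either side, which immediately delivers the sub-arc; the two one-step bounds control the endpoint nearest the dip in each of your sub-cases, and the required strictness $n^-<n^+$ is handled by ruling out $q=N^+\pm 1$. The paper instead starts from a low--high pair $(n^1,n^2)$, argues that the orbit must land in the strip $[e_*,e_*+E^*]$ when rising (it cannot skip it, by definition of $E^*$), and then runs an iteration: if a dip occurs between the crossing and $n^2$, find a later crossing, and so on, until either a clean sub-arc is obtained or one is forced to have $\hat e_{n^2-1}\in[e_*,e_*+E^*]$ with $\hat e_{n^2}-\hat e_{n^2-1}>E$, contradicting the physical one-step bound. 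Your direct construction avoids that iterative/contradiction step at the cost of a slightly larger $K$ (harmless, since Lemma~\ref{lemmabo} holds for every $K$); your sub-case~2 estimate even uses $\sqrt{2e_*}$ in place of $\sqrt{2(e_*+E^*)}$, which is sharper than needed given the hypothesis on $E$.
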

\begin{proof}
From Lemma \ref{lemmabo} with $K=E+E^*+\frac{e_*}{2}+\epsilon$ we have that there exist two integers  $n^1<n^2$ such that $|\hat{e}_{n^2} -\hat{e}_{n^1}|\geq E+E^*+\frac{e_*}{2}$ and $\hat{e}_n> \frac{e_*}{2}$ for every integer $n$.
  %\in[n^-,n^+]$.
Consider the case $\hat{e}_{n^2} >\hat{e}_{n^1}$: the case $\hat{e}_{n^2} <\hat{e}_{n^1}$ can be studied similarly.
% We can suppose that $\hat{e}_{n^1}>e^*$. This is clear if $\inf \hat{\Eo}_n>e^*$.
 We claim that there exists $n^3$ such that $ \hat{e}_{n^3}>e_*$ and
 \[
 \hat{e}_{n^2}-\hat{e}_{n^3}>E.
 \]
This is clear if $\hat{e}_{n^1}>e_*$. To prove the claim in the other case, we remember that $\frac{e_*}{2}<\hat{e}_{n^1}<e_*$, then,
  \[
  \hat{e}_{n^2}>\hat{e}_{n^1}+E+E^*+\frac{e_*}{2}>e_*+E+E^*>e_*+E^*.
  \]
  This implies that to go from $\hat{e}_{n^1}$ to $\hat{e}_{n^2}$ we must cross the strip $\Sigma=\TT\times [e_*,e_*+E^*]$ that has width equal to $E^*$.
  %\[
  %\inf\hat{e}_n +mf-e^*>mf-1>f.
  %\]
  This cannot be done in one iterate by the definition of $E^*$. So that there exists $n^3$ such that $\hat{e}_{n^3}\in[e_*,e_* +E^*]$. Finally,
  \[
  \hat{e}_{n^2}-\hat{e}_{n^3}>e_*+E +E^*-(e_*+E^*)=E.  
  \]
%Hence, we got that there exist two integers  $n^-<n^+$ such that $|\hat{e}_{n^+} -\hat{e}_{n^-}|\geq E+f$ and $\hat{e}_{n\pm}> e^*$.
  This argument allows to conclude the proof. Suppose that there exists %for every $n_1<n_2$ such that $|\hat{e}_{n_2} -\hat{e}_{n_1}|\geq E$
  $n^4\in[n^3,n^2]$ such that $ \frac{e_*}{2}<\hat{e}_{n^4}<e_*$. As before, there exists $n^5\in[n^4,n^2]$ such that $\hat{e}_{n^5}\in[e_*,e_* +E^*]$ and $\hat{e}_{n^2}-\hat{e}_{n^5}>E$. Repeating this argument we find $\hat{e}_{n^2-1}\in[e_*,e_* +E^*]$ and $\hat{e}_{n^2}-\hat{e}_{n^2-1}>E$. This is a contradiction with the hypothesis on $E$, since by \eqref{eq:unbe}, if $e<e_* +E^*$, then
  $|\PEo-\Eo|<4\sqrt{2(e_*+E^*)}\norm{\df}+8\norm{\df}^2 $.
  
\end{proof}
 Lemma \ref{lemmabo2} gives the proof of the first statement in Theorem \ref{teo_main}. One just has to consider the bouncing motion with initial condition $(t_0,v_0)=(\hat{t}_{n^-},\sqrt{2\hat{e}_{n^-}})$.

 Concerning chaotic dynamics, we first recall that the diffeomorphism  $\tilde{\Phi}$ satisfies the hypothesis of Aubry-Mather theory in \cite{bangert} (see also \cite{maro3}, \cite[Theorem 8.1]{matherforni}).\\
In particular, for every $\omega\in\RR\setminus\QQ$ there exists a compact $\tilde{\Phi}$-invariant set $M_\omega$ with rotation number $\omega$ that is either an invariant curve or a Cantor set. Using some ideas as in Lemma \ref{bound_curve} there exists $\omega^*$ sufficiently large such that for $\omega>\omega^*$, $M_\omega\subset\AA_{e^*}$. This implies that $M_\omega$ is made of orbits of the original map $\Phi$ and, from Proposition \ref{propmain}, is a Cantor set. 

In this setting, the following theorem by Forni \cite{forni} implies chaotic dynamics. Let us fix $\omega>\omega^*$ and denote by $\sigma_\omega$ the unique $\tilde{\Phi}$-invariant ergodic Borel probability measure supported on $M_\omega$.

\begin{theorem}
  Let $F$ be an exact symplectic $C^1$ twist diffeomorphism of the cylinder $\AA$ that does not admit any invariant curve of rotation number $\omega\in\RR\setminus\QQ$. Then there exists a $F$-invariant ergodic Borel probability measure $\mu_\omega$, of angular rotation number $\omega$, having positive metric entropy. Moreover, $\mu_\omega$ can be chosen arbitrarily close to $\sigma_\omega$ in the weak topology on the space of compactly supported Borel probability measures on $\AA$.  
\end{theorem}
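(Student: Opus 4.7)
The plan is to exploit the variational structure of exact symplectic twist maps together with Aubry-Mather theory. Since $F$ admits no invariant curve of rotation number $\omega$, the minimal Aubry-Mather set $M_\omega$ is a Cantor set (by the Birkhoff-Mather alternative). Its projection to $\TT$ has countably many complementary ``gaps'', each bounded by two orbits of $F|_{M_\omega}$, and $\sigma_\omega$ is the unique invariant ergodic probability measure supported on $M_\omega$.

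The first main step would be to construct $F$-orbits that shadow $M_\omega$ but are free to traverse prescribed gaps at prescribed times. For this I would use constrained minimisation and minimax applied to the action $H_{hk}(x_h,\dots,x_k)=\sum_{n=h}^{k-1} h(x_n,x_{n+1})$: between any two boundary orbits of a gap, the action admits non-minimal critical points corresponding to orbits that cross the gap (the ``secondary minimisers'' of Bangert and Mather). Iterating this over all gaps and all times, for every itinerary in a suitable subshift $\Xi\subset\{0,1\}^\ZZ$ one produces an $F$-orbit which, at each time $n$, either remains close to $M_\omega$ or performs a selected gap excursion according to the symbol. This gives a compact $F$-invariant set $K\subset\AA$ together with a continuous semi-conjugacy $\pi\colon K\to\Xi$ onto a nontrivial subshift.

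Once the symbolic coding is in hand, the proof concludes by transport of measures. Any shift-invariant positive-entropy measure $\nu$ on $\Xi$ lifts to an $F$-invariant measure on $K$ whose metric entropy is at least that of $\nu$; passing to an ergodic component preserves positivity of entropy. To obtain weak closeness to $\sigma_\omega$, one chooses $\nu$ supported on sequences whose gap-crossing symbol has frequency $\de\to 0$, so that orbits spend asymptotically all their time within a small neighbourhood of $M_\omega$; unique ergodicity of $\sigma_\omega$ on $M_\omega$ then forces the pushforwards to converge weakly to $\sigma_\omega$. The main obstacle will be the first step: verifying that the secondary minimisers genuinely realise an independent binary coding, i.e.\ that the ``stay'' and ``cross'' branches are distinguishable as bi-infinite orbits rather than being absorbed back into $M_\omega$, which requires quantitative action estimates exploiting strict convexity of $h$ in gaps and is exactly where the Cantor (non-curve) structure of $M_\omega$ is essential.
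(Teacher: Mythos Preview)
The paper does not prove this theorem: it is quoted verbatim as a result of Forni \cite{forni} and then applied as a black box to the extended map $\tilde{\Phi}$. There is therefore no ``paper's own proof'' to compare your proposal against.

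That said, your sketch is a reasonable caricature of Forni's actual argument, but it conflates two distinct mechanisms and is too vague at the decisive point. Forni does not build a semiconjugacy to a subshift and then pull back a Bernoulli-type measure; rather, he constructs the invariant measure $\mu_\omega$ directly as a weak limit of action-minimising measures for perturbed variational problems supported in the gaps of $M_\omega$, and obtains positive metric entropy via Pesin theory by showing that the Lyapunov exponent is strictly positive $\mu_\omega$-a.e. The weak closeness to $\sigma_\omega$ comes out of the limiting construction itself, not from tuning a symbol frequency. Your route through symbolic dynamics is closer in spirit to the Angenent approach \cite{angenent1,angenent2}, which yields positive \emph{topological} entropy on an invariant set but does not by itself produce an ergodic invariant measure with positive \emph{metric} entropy and the required weak-closeness statement; bridging that gap (lifting a shift measure, checking the lift has positive metric entropy for $F$ rather than just for the quotient, and controlling the support) is exactly where your outline is hand-waving. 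The obstacle you flag at the end --- distinguishing ``stay'' from ``cross'' branches --- is real, and is handled in the literature by Peierls-barrier/action-gap estimates, not by ``strict convexity of $h$ in gaps'' alone.
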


We can apply this theorem to the extended map $\tilde{\Phi}$ and note that, as we showed, has no invariant curves in $\AA_{e^*}$ and in particular, none with rotation number $\omega>\omega^*$. Since $\omega>\omega^*$, the measure $\sigma_\omega$ is also $\Phi$-invariant. The measure $\mu_\omega$ with positive metric entropy can be chosen arbitrarily close to $\sigma_\omega$, hence we can have $\supp\mu_\omega$ close to $M_\omega$ and contained in $\AA_{e^*}$. In this way we have that $\mu_\omega$ is $\Phi$-invariant. From the variational principle for the topological entropy we get the thesis.

\section{Conclusions}\label{sec:conclusion}
We considered the model of a free falling ball bouncing elastically on a racket moving in the vertical direction according to a regular periodic function $f$. We were interested in the possibility of diffusive motions and chaotic dynamics. Both problems have been already investigated and an affirmative answer was given if $\norm{\df}$ was sufficiently large. In \cite{maro4} we showed that large values of $\norm{\df}$ were not necessary to have unbounded motions. In the present paper we provided a sufficient condition depending only on $\ddf$ giving diffusive and chaotic motions.

The proof is based on the breaking of invariant curves, using a converse KAM method. The method is based on a variational characterization of (segments of) orbits on invariant curves.  We stress that Theorem \ref{teo_nec} is not optimal as more stringent necessary conditions can be obtained considering longer orbit segments. We guess that a more stringent criterion would give a weaker condition than the one we obtained in Theorem \ref{teo_main}. However this would have implied much more complicated computations and the result would not have been optimal. Note that obtaining an optimal condition in this context is equivalent to finding an explicit necessary and sufficient condition for the existence of invariant curves. This problem seems to be very hard.     

\section*{Acknowledgements}
The author would like to thank the unknown referee for several valuable advice that significantly improved the final version of the paper.

\end{document}